\setlist{itemsep=1pt, parsep=1pt, leftmargin=15pt}
\def\namedlabel#1#2{\begingroup
   \def\@currentlabel{#2}%
   \label{#1}\endgroup
}
\newtheorem{thm}{Theorem}
\newtheorem{prop}{Proposition}
\newcommand{\ahull}{\mathrm{Aff}}
\newcommand{\rnk}{\mathrm{rank}}
\newcommand{\tr}{\mathrm{Tr}}
\newcommand{\inside}{\mathrm{int}}
\newcommand{\Eps}{\mathcal{E}}
\newcommand{\bbR}{\mathbb{R}}
\newcommand{\conv}{\text{conv}}
\newcommand{\plots}{plots}
\begin{document}

\title{An Extended Frank-Wolfe Method with ``In-Face" Directions, and its Application to Low-Rank Matrix Completion}
\author{Robert M. Freund\thanks{MIT Sloan School of Management, 77 Massachusetts Avenue, Cambridge, MA   02139
({mailto:  rfreund@mit.edu}).  This author's research is supported by AFOSR Grant No. FA9550-15-1-0276, the MIT-Chile-Pontificia Universidad Cat\'olica de Chile Seed Fund, and the MIT-Belgium Universit\'{e} Catholique de Louvain Fund.}
\and Paul Grigas\thanks{MIT Operations Research Center, 77 Massachusetts Avenue, Cambridge, MA   02139
({mailto:  pgrigas@mit.edu}).  This author's research has been partially supported through an NSF Graduate Research Fellowship, the MIT-Chile-Pontificia Universidad Cat\'olica de Chile Seed Fund, and the MIT-Belgium Universit\'{e} Catholique de Louvain Fund.}
\and Rahul Mazumder\thanks{MIT Sloan School of Management, 77 Massachusetts Avenue, Cambridge, MA   02139
({mailto:  rahulmaz@mit.edu}).   This author's research was partially supported by ONR Grant N000141512342 and a grant from the Moore Sloan Foundation.}}

\maketitle

\begin{abstract}
Motivated principally by the low-rank matrix completion problem, we present an extension of the Frank-Wolfe method that is designed to induce near-optimal solutions on low-dimensional faces of the feasible region.  This is accomplished by a new approach to generating ``in-face" directions at each iteration, as well as through new choice rules for selecting between in-face and ``regular" Frank-Wolfe steps. Our framework for generating in-face directions generalizes the notion of away-steps introduced by Wolfe.  In particular, the in-face directions always keep the next iterate within the minimal face containing the current iterate.  We present computational guarantees for the new method that trade off efficiency in computing near-optimal solutions with upper bounds on the dimension of minimal faces of iterates. We apply the new method to the matrix completion problem, where low-dimensional faces correspond to low-rank matrices. We present computational results that demonstrate the effectiveness of our methodological approach at producing nearly-optimal solutions of very low rank.  On both artificial and real datasets, we demonstrate significant speed-ups in computing very low-rank nearly-optimal solutions as compared to either the Frank-Wolfe method or its traditional away-step variant.
\end{abstract}


\section{Introduction}
In the last ten years the problem of \emph{matrix completion} (see, for example, \cite{rennie2005fast, candes2009exact, candes2010power}) has emerged as an important and ubiquitous problem in statistics and machine learning, with applications in diverse areas \cite{candes2010matrix,troyanskaya2001missing}, with perhaps the most notable being recommender systems \cite{bennett2007netflix, bell2007lessons, koren2009matrix}.  In matrix completion one is given a partially observed data matrix $X\in \mathbb{R}^{m \times n}$, i.e., there is only knowledge of the entries $X_{ij}$ for $(i,j) \in \Omega$ where $\Omega \subseteq \{1, \ldots, m\} \times \{1, \ldots, n\}$ (often, $|\Omega| \ll m \times n$), and the task is to predict (fill in) the unobserved entries of $X$. The observed entries are possibly contaminated with noise, i.e., $X = Z^\ast + \Eps$ where $Z^\ast \in \bbR^{m \times n}$ represents the ``true data matrix" and $\Eps$ is the noise term, and the goal is to accurately estimate the entire matrix $Z^\ast$, which most importantly includes estimating the entries $Z^\ast_{ij}$ for $(i,j) \not \in \Omega$. Clearly, this problem is in general ill-posed -- without any restrictions, the unobserved entries can take on any real values. The ill-posed nature of the problem necessitates that any successful approach must, either explicitly or implicitly, make some type of assumption(s) about underlying \emph{structure} of the matrix $Z^\ast$. The most common approach, especially without {\it a priori} knowledge about the data-generating mechanism, is to assume that the matrix $Z^\ast$ is \emph{low-rank}. This situation is similar to the ``bet on sparsity" principle in linear regression \cite{ESLBook}:  if $Z^\ast$ does not have low-rank structure, then we cannot expect any method to successfully fill in the missing entries; on the other hand, if $Z^\ast$ does have low-rank, then a method that makes such a structural assumption should have a better chance at success.


The low-rank structural assumption naturally leads to the following optimization problem:
\begin{equation}\label{hard}\begin{array}{llrl}
P_r: & z^* \ \ := \ \  & \min\limits_{Z \in \mathbb{R}^{m \times n}} & \displaystyle\frac{1}{2}\sum_{(i,j) \in \Omega} (Z_{ij}-X_{ij})^2 \\ \\
& & \mbox{s.t.} & \mbox{rank}(Z) \le r \ ,
\end{array}\end{equation}
where $r$ is a parameter representing the assumed belief about the rank of $Z^\ast$.
Notice that \eqref{hard} is a combinatorially hard problem due to the rank constraint \cite{chistov1984complexity}.

Pioneered by \cite{fazel2002matrix}, a promising strategy for attacking \eqref{hard} is to use the \emph{nuclear norm} as a proxy for the rank. Recall that, for a given $Z \in \mathbb{R}^{m \times n}$, the sum of the singular values of $Z$ is a norm often referred to as the nuclear norm. Directly replacing the combinatorially hard rank constraint in \eqref{hard} with a constraint on the nuclear norm of $Z$ leads to the following convex optimization problem:
\begin{equation}\label{easy}
\begin{array}{llrl}
NN_\delta: & f^* \ \ := \ \  & \min\limits_{Z \in \mathbb{R}^{m \times n}} & \displaystyle\frac{1}{2}\sum_{(i,j) \in \Omega} (Z_{ij}-X_{ij})^2 \\ \\
& & \mbox{s.t.} & \|Z\|_{N1} \le \delta \ .
\end{array}
\end{equation}
Let ${\cal B}_{N1}(Z,\delta):=\{ Y \in \mathbb{R}^{m \times n}: \|Y-Z\|_{N1} \le \delta\}$ denote the nuclear norm ball of radius $\delta$ centered at the point $Z$, so that the feasible region of \eqref{easy} is ${\cal B}_{N1}(0,\delta)$. Despite its apparent absence from the problem formulation, it is nevertheless imperative that computed solutions of \eqref{easy} have low rank.  Such low-rank computed solutions are coerced by the nuclear norm constraint, and there has been substantial and influential work showing that, for many types of data generating mechanisms, an optimal solution of \eqref{easy} will have appropriately low rank (see, for instance, \cite{fazel2002matrix, candes2009exact, candes2010power, recht2010guaranteed}).  This line of work typically focuses on studying the properties of optimal solutions of \eqref{easy}, and thus abstracts away the choice of algorithm to solve \eqref{easy}. Although this abstraction may be reasonable in some situations, and is certainly a reasonable way to study the benefits of nuclear norm regularization, it may also be limiting. Indeed, in recent years, the notion that ``convex optimization is a black box" has become increasingly unreasonable. Concurrently with the explosion of ``big data" applications, there has been a substantial amount of recent work on the development and analysis of algorithms for huge-scale convex optimization problems where interior point methods and other polynomial-time algorithms are ineffective. Moreover, there has been an increasing interest in algorithms that directly promote desirable structural properties of their iterates.  One such algorithm that satisfies both of these properties -- scalability to huge-size problems and structurally favorable iterates -- is the Frank-Wolfe method and its extensions, which is the starting point of the work herein.  Indeed, much of the recent computational work for matrix completion is based on directly applying first-order methods and related methods that have structurally favorable iterates. \cite{ji2009accelerated, cai2010singular, ma2011fixed}. Mazumder et al. \cite{mazumder2010spectral} develop a related algorithm based on SVD soft thresholding that efficiently utilizes the special structure of matrix completion problems. In one of the earlier works applying the Frank-Wolfe method to nuclear norm regularized problems, Jaggi and Sulovsk \cite{jaggi2010simple} consider first lifting the nuclear norm regularized problem \eqref{easy} to a problem over the semidefinite cone and then apply the Frank-Wolfe method. Harchaoui et al. \cite{harchaoui2012conditional} pointed out that the Frank-Wolfe method can be applied directly to the nuclear norm regularized problem \eqref{easy}, and also developed a variant of the method that applies to penalized nuclear norm problems. Mishra et al. \cite{mishra2013low} develop a second-order trust region method that shares a few curious similarities with the extended Frank-Wolfe method developed herein. Mu et al. \cite{mu2014scalable} consider a hybrid proximal gradient/Frank-Wolfe method for low-rank matrix and tensor recovery. Rao et al. \cite{rsw} consider a variant of Frank-Wolfe with ``backward steps" (which differ from away steps) in the general context of atomic norm regularization.

\noindent {\bf The Frank-Wolfe method, in-face directions, and structural implications.}  Due to its low iteration cost and convenient structural properties (as we shall soon discuss), the Frank-Wolfe method (also called the conditional gradient method) is especially applicable in several areas of machine learning and has thus received much renewed interest in recent years, see \cite{jaggi2013revisiting, harchaoui, GF-FW, lan2013complexity} and the references therein. The Frank-Wolfe method, originally developed by \cite{frank-wolfe} in the context of quadratic programming, was later generalized to convex optimization problems with smooth (differentiable) convex objective functions and bounded convex feasible regions, of which \eqref{easy} is a particular instance. Indeed, letting $f(Z) := \tfrac{1}{2}\sum_{(i,j) \in \Omega} (Z_{ij}-X_{ij})^2$ denote the least squares objective in \eqref{easy}, it is easy to see that $f(\cdot)$ is a smooth convex function, and the feasible region of \eqref{easy} is ${\cal B}_{N1}(0,\delta)$, which is a bounded convex set.

As applied to problem \eqref{easy}, the Frank-Wolfe method proceeds at the current iterate $Z^k$ by solving a linear optimization subproblem to compute $\tilde Z^k \gets \arg\min_{Z \in {\cal B}_{N1}(0,\delta)} \left\{\nabla f(Z^k) \bullet Z\right\}$ (here ``$\bullet$'' denotes the usual trace inner product) and updates the next iterate as
\begin{equation}\label{update}
Z^{k+1} \gets Z^{k} + \bar{\alpha}_k(\tilde{Z}^k - Z^k)  \ ,
\end{equation}
for some $\bar\alpha_k \in [0, 1]$. It can be shown, see for instance \cite{GF-FW, jaggi2013revisiting, harchaoui} and as we expand upon in Section \ref{fw}, that for appropriate choices of the step-size sequence $\{\bar\alpha_k\}$ it holds that \begin{equation}\label{tradeoff}
f(Z^k) - f^\ast \leq \frac{8\delta^2}{k+3} \ \ \ \ \text{and} \ \ \ \ \rnk(Z^k) \leq k + 1 \ .
\end{equation}
The bound on the objective function gap in \eqref{tradeoff} is well understood and follows from a standard analysis of the Frank-Wolfe method. The bound on the rank of $Z^k$ in \eqref{tradeoff}, while also well understood, follows from the special structure of the nuclear norm ball. Specifically, and as we further expand upon in Sections \ref{fw} and \ref{comp}, for the nuclear norm regularized matrix completion problem \eqref{easy}, the solutions to the linear optimization subproblem solved at each iteration are specially structured -- they are rank one matrices arising from the leading left and right singular vectors of the matrix $\nabla f(Z^k)$. Thus, assuming that $Z^0$ is a rank one matrix, the simple additive form of the updates \eqref{update} leads to the bound on the rank in \eqref{tradeoff}.
The above bound on the rank of $Z^k$ is precisely the ``favorable structural property" of the iterates of the Frank-Wolfe method that was mentioned to earlier, and when combined with the bound on the objective function gap in \eqref{tradeoff} yields a nice tradeoff between data fidelity and low-rank structure.

Here we see that in the case of the Frank-Wolfe method, the \emph{properties of the algorithm} provide additional insight into how problem \eqref{easy} induces low-rank structure.  A natural question is: can the tradeoff given by \eqref{tradeoff} be improved, either theoretically or practically or both? That is, can we modify the Frank-Wolfe method in a way that maintains the bound on the objective function gap in \eqref{tradeoff} while strictly improving the bound on the rank? This is the motivation for the development of what we call ``in-face" directions and their subsequent analysis herein. We define an in-face direction to be any descent direction that keeps the next iterate within the minimal face of ${\cal B}_{N1}(0,\delta)$ containing the current iterate. It turns out that the faces of the nuclear norm ball are characterized by the (thin) SVDs of the matrices contained within them \cite{So} -- thus, an in-face direction will move to a new point $Z^{k+1}$ with a similar SVD structure as $Z^k$, and moreover will keep the rank of  $Z^{k+1}$ the same (or will lower it, even better), i.e., $\rnk(Z^{k+1}) \le \rnk(Z^k)$. Clearly if we can find good in-face directions, then the bound on the rank in \eqref{tradeoff} will be improved. At the same time, if there are no in-face directions that are ``good enough" with respect to improvements in objective function values, then a ``regular" Frank-Wolfe direction may be chosen, which will usually increase the rank of the next iterate by one. In this paper, we develop an extension of the Frank-Wolfe method that incorporates in-face directions and we provide both a precise theoretical analysis of the resulting tradeoff akin to \eqref{tradeoff}, as well as computational results that demonstrate significant improvements over existing methods both in terms of ranks and run times.
\subsection{Organization/Results}

The paper is organized as follows. In Section \ref{fw}, after reviewing the basic Frank-Wolfe Method and the away-step modification of Wolfe and Gu\'{e}lat and Marcotte, we present our extended Frank-Wolfe Method based on ``in-face'' directions (in addition to regular Frank-Wolfe directions), this being the main methodological contribution of the paper.  This In-Face Extended Frank-Wolfe Method is specifically designed to induce iterates that lie on low-dimensional faces of the feasible set $S$, since low-dimensional faces of the feasible region contain desirable ``well-structured" points (sparse solutions when $S$ is the $\ell_1$ ball, low-rank matrices when $S$ is the nuclear norm ball).  The in-face directions are any directions that keep the current iterate in its current minimal face of $S$.  We present two main strategies for computing in-face directions: \emph{(i)} away-steps as introduced by Wolfe \cite{wolfe} and Gu\'{e}lat and Marcotte \cite{gm}, and \emph{(ii)} approximate full optimization of the objective $f(\cdot)$ over the current minimal face. The In-Face Extended Frank-Wolfe Method uses a simple decision criteria for selecting between in-face and regular Frank-Wolfe directions.  In Theorem \ref{RRguarantee} we present computational guarantees for the In-Face Extended Frank-Wolfe Method.  These guarantees essentially show that the In-Face Extended Frank-Wolfe Method maintains $O(c/k)$ convergence after $k$ iterations (which is optimal for Frank-Wolfe type methods in the absence of polyhedral structure or strong convexity \cite{lan2013complexity}), all the while promoting low-rank iterates via the parameters of the method which affect the constant $c$ above, see Theorem \ref{RRguarantee} for specific details.

In Section \ref{comp} we discuss in detail how to apply the In-Face Extended Frank-Wolfe Method to solve the matrix completion problem \eqref{easy}.  We resolve issues such as characterizing and working with the minimal faces of the nuclear norm ball, solving linear optimization subproblems on the nuclear norm ball and its faces, computing steps to the boundary of the nuclear norm ball, and updating the SVD of the iterates.  In Proposition \ref{rank-prop} we present a bound on the ranks of the matrix iterates of the In-Face Extended Frank-Wolfe Method that specifies how the in-face directions reduce the rank of the iterates over the course of the algorithm.  Furthermore, as a consequence of our developments we also demonstrate, for the first time, how to effectively apply the away-step method of \cite{gm} to problem \eqref{easy} in a manner that works with the natural parameterization of variables $Z \in \bbR^{m \times n}$ (as opposed to an ``atomic" form of \cite{gm}, as we expand upon at the end of Section \ref{sec:away-steps}).

Section \ref{compcomp} contains a detailed computational evaluation of the In-Face Extended Frank-Wolfe Method and discusses several versions of the method based on different strategies for computing in-face directions and different algorithmic parameter settings. We compare these versions to the regular Frank-Wolfe method, the away-step method of \cite{gm}, and an atomic version of \cite{gm} (as studied in \cite{simon, beck2015linearly, pena2015neumann}) on simulated problem instances as well as on the MovieLens10M dataset.  Our results demonstrate that the In-Face Extended Frank-Wolfe Method (in different versions) shows significant computational advantages in terms of delivering low rank and low run time to compute a target optimality gap.  Especially for larger instances, one version of our method delivers very low rank solutions with reasonable run times, while another version delivers the best run times, beating existing methods by a factor of $10$ or more.


\subsection{Notation}

Let $E$ be a finite-dimensional linear space.  For a norm $\|\cdot\|$ on $E$, let $\|\cdot\|^*$ be the associated dual norm, namely $\|c\|^{\ast} := \max \{ c^Tz : \|z\| \le 1\}$ and $c^Tz$ denotes the value of the linear operator $c$ acting on $z$.  The ball of radius $\delta$ centered at $\bar z$ is denoted ${\cal B}(\bar z, \delta):=\{z : \|z-\bar z\| \le \delta \}$.  For $X, Y \in \mathbb{S}^{k \times k}$ (the set of $k \times k$ symmetric matrices), we write ``$X\succeq 0$'' to denote that $X$ is symmetric and positive semidefinite, ``$X\succeq Y$'' to denote that $X-Y\succeq 0$, and ``$X\succ 0$'' to denote that $X$ is positive definite, etc.  For a given $Z \in \mathbb{R}^{m \times n}$ with $r:= \rnk(Z)$, the (thin) singular value decomposition (SVD) of $Z$ is $Z = UDV^T$ where $U \in \mathbb{R}^{m \times r}$ and $V \in \mathbb{R}^{n \times r}$ are each orthonormal ($U^TU=I$ and $V^TV=I$), and $D=\mbox{Diag}(\sigma_1, \dots, \sigma_r)$ comprises the non-zero (hence positive) singular values of $Z$. The nuclear norm of $Z$ is then defined to be $\|Z\|_{N1} := \sum_{j=1}^r \sigma_j$ .   (In much of the literature, this norm is denoted $\|\cdot\|_*$; we prefer to limit the use of ``$*$'' to dual norms, and hence we use the notation $\|\cdot\|_{N1}$ instead.) Let ${\cal B}_{N1}(Z,\delta):=\{ Y \in \mathbb{R}^{m \times n}: \|Y-Z\|_{N1} \le \delta\}$ denote the nuclear norm ball of radius $\delta$ centered at the point $Z$. Let $\|Z\|_F$ denote the Frobenius norm of $Z$, namely $\|Z\|_F = \sqrt{\sum_{j=1}^r \sigma_j^2} = \sqrt{\tr(Z^TZ)}$.  The dual norm of the nuclear norm is the largest singular value of a matrix and is denoted by $\|\cdot\|^*_{N1}=\|\cdot\|_{N\infty}$; given $S \in \mathbb{R}^{m \times n}$ with SVD $S=UDV^T$, then $\|S\|_{N\infty}=\max\{\sigma_1, \ldots, \sigma_r \}$ where $D=\mbox{Diag}(\sigma_1, \dots, \sigma_r)$.  A spectrahedron is a set of the form ${\cal S}_t^k := \{X \in \mathbb{S}^{k\times k} : X \succeq 0, \ I \bullet X \le t  \}$ or $\bar{\cal S}_t^k := \{X \in \mathbb{S}^{k\times k}  : X \succeq 0, \ I \bullet X = t  \}$, where ``$\bullet$'' denotes the usual trace inner product.

\section{Frank-Wolfe Method, Away Steps, and In-Face Steps}\label{fw}

Problem \eqref{easy} is an instance of the more general problem:

\begin{equation}\label{poi3}
f^* \ \ := \ \ \min_{x \in S} \ f(x)
\end{equation}


\noindent where $S \subset E$ is a closed and bounded convex set, and $f(\cdot)$ is a differentiable convex function on $S$.  We first review solving instances of \eqref{poi3} using the Frank-Wolfe method, whose basic description is given in
Algorithm~\ref{algo-vfw}.

\begin{algorithm}
\caption{Frank-Wolfe Method for optimization problem \eqref{poi3}}\label{algo-vfw}
\begin{algorithmic}
\STATE Initialize at $x_0 \in S$, (optional) initial lower bound $B_{-1}$, $k \gets 0$ .\\
$ \ $ \\
At iteration $k$:
\STATE 1. Compute $\nabla f (x_k)$ .
\STATE 2. Compute $\tilde x_k \gets \arg\min\limits_{x \in S}\{f(x_k) + \nabla f (x_k)^T(x - x_k)\}$ .\\
\ \ \ \ \ \ \ \ \ \ $B^w_k \leftarrow f(x_k) + \nabla f (x_k)^T(\tilde x_k - x_k)$ .\\
\ \ \ \ \ \ \ \ \ \ Update best bound: $B_k \leftarrow \max\{B_{k-1}, B^w_k\}$ .
\STATE 3. Set $x_{k+1} \gets x_k + \bar{\alpha}_k(\tilde x_k - x_k)$, where $\bar{\alpha}_k \in [0,1]$ .
\end{algorithmic}
\end{algorithm}

\noindent Typically the main computational burden at each iteration of the Frank-Wolfe method is solving the linear optimization subproblem in Step (2.) of Algorithm \ref{algo-vfw}. The quantities $B^w_k$ are lower bounds on the optimal objective function value $f^*$ of \eqref{poi3}, a fact which follows easily from the gradient inequality, see Jaggi \cite{jaggi2013revisiting} or \cite{GF-FW}, and hence $B_k = \max\{B_{-1}, B^w_0, \ldots, B^w_k\}$ is also a lower bound on $f^*$.  The lower bound sequence $\{B_k\}$ can be used in a variety of step-size strategies \cite{GF-FW} in addition to being useful in termination criteria.

\noindent When the step-size sequence $\{\bar \alpha_k\}$ is chosen using the simple rule $\bar\alpha_k := \frac{2}{k+2}$, then the Frank-Wolfe method has the following computational guarantee at the $k^{\mathrm{th}}$ iteration, for $k \ge 0$:
\begin{equation}\label{nemguarantee} f(x_k) - B_{k} \le f(x_k) - f^* \le \frac{2LD^2}{k+3} \ , \end{equation}
\noindent where $D:= \max_{x, y \in S} \|x-y\|$ is the diameter of $S$, and $L$ is a Lipschitz constant of the gradient of $f(\cdot)$ on $S$, namely:
\begin{equation}\label{lipschitz} \|\nabla f(x) - \nabla f(y) \|_* \le L \|x-y\| \ \ \ \mbox{for~all~} x,y \in S \ . \end{equation}
If $\bar\alpha_k$ is instead chosen by exact line-search, namely $\bar\alpha_k \gets \arg\min_{\alpha \in [0,1]} f(x_k + \alpha(\tilde x_k -x_k))$, then the guarantee \eqref{nemguarantee} still holds, see Section 3.4 of \cite{GF-FW}, this being particularly relevant when $f(\cdot)$ is a convex quadratic as in \eqref{easy} in which case the exact line-search reduces to a simple formula.  Alternatively, one can consider a step-size rule based on minimizing an upper-approximation of $f(\cdot)$ inherent from the smoothness of the gradient, namely:
\begin{equation}\label{lipschitz2} f(y) \le f(x) + \nabla f(x)^T(y-x) + \tfrac{L}{2}\|y-x\|^2 \ \ \ \mbox{for~all~} x,y \in S \ , \end{equation}
which follows from \eqref{lipschitz} (see \cite{GF-FW}, for example, for a concise proof).  The following is a modest extension of the original analysis of Frank and Wolfe in \cite{frank-wolfe}.

\begin{thm}\label{QAtheorem} {\bf (extension of \cite{frank-wolfe})} Let $\bar L \ge L$ be given, and consider using either an exact line-search or the following step-size rule for the Frank-Wolfe method:

\begin{equation}\label{btrule}\bar\alpha_k \gets \min\left\{\frac{\nabla f (x_k)^T(x_k - \tilde x_k)}{\bar L\|x_k - \tilde x_k\|^2}, 1 \right\} \ \mbox{for~all~} k \ge 0 \ .  \end{equation}
Then $f(x_k)$ is monotone decreasing in $k$, and it holds that:
\begin{equation}\label{btguarantee} f(x_k) - f^* \le f(x_k) - B_k \le \frac{1}{\frac{1}{f(x_0) - B_0} + \frac{k}{2\bar L D^2}} \ \  < \ \ \frac{2\bar L D^2}{k} \ .  \end{equation}
\end{thm}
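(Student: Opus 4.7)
The plan is to track the duality gap $\phi_k := f(x_k) - B_k$ and show it satisfies the reciprocal recursion $1/\phi_{k+1} \ge 1/\phi_k + 1/(2\bar L D^2)$, from which the theorem follows by telescoping and then discarding the positive term $1/\phi_0$. The starting point is the smoothness inequality \eqref{lipschitz2} instantiated along the ray $y = x_k + \alpha(\tilde x_k - x_k)$, giving for any $\alpha \in [0,1]$
\[
f(x_k + \alpha(\tilde x_k - x_k)) \ \le \ f(x_k) - \alpha G_k + \tfrac{\bar L}{2}\alpha^2 \|\tilde x_k - x_k\|^2 \ \le \ f(x_k) - \alpha G_k + \tfrac{\bar L}{2}\alpha^2 D^2,
\]
where $G_k := -\nabla f(x_k)^T(\tilde x_k - x_k) = f(x_k) - B_k^w \ge 0$ is the Frank-Wolfe gap, and $\phi_k \le G_k$ since $B_k \ge B_k^w$. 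Both the exact line-search and the rule \eqref{btrule} produce $\bar\alpha_k$ whose resulting $f(x_{k+1})$ is upper bounded by the right-hand side above at \emph{any} admissible $\alpha$ of my choice, so I may freely select an analytically convenient $\alpha$. Taking $\alpha = 0$ gives the claimed monotonicity $f(x_{k+1}) \le f(x_k)$, and combined with $B_{k+1} \ge B_k$ this also yields $\phi_{k+1} \le f(x_{k+1}) - B_k$.

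Next I would insert the analysis-only step $\alpha := \min\{\phi_k/(\bar L D^2),\, 1\}$ and split into two regimes. In the small-gap regime $\phi_k \le \bar L D^2$, the interior choice $\alpha = \phi_k/(\bar L D^2)$ is admissible; substituting and using $G_k \ge \phi_k$ gives $f(x_{k+1}) \le f(x_k) - \phi_k^2/(2\bar L D^2)$, hence $\phi_{k+1} \le \phi_k(1 - \phi_k/(2\bar L D^2))$. In the large-gap regime $\phi_k > \bar L D^2$, the boundary choice $\alpha = 1$ yields $f(x_{k+1}) \le f(x_k) - G_k + \bar L D^2/2 = B_k^w + \bar L D^2/2$, producing the clean one-shot bound $\phi_{k+1} \le \bar L D^2/2$.

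To close the argument I would verify the reciprocal invariant in both cases. In the small-gap case, using $(1-x)^{-1} \ge 1+x$ on $[0,1)$,
\[
\frac{1}{\phi_{k+1}} \ \ge \ \frac{1}{\phi_k(1 - \phi_k/(2\bar L D^2))} \ \ge \ \frac{1}{\phi_k}\!\left(1 + \frac{\phi_k}{2\bar L D^2}\right) \ = \ \frac{1}{\phi_k} + \frac{1}{2\bar L D^2}.
\]
In the large-gap case, $1/\phi_{k+1} \ge 2/(\bar L D^2) > 3/(2\bar L D^2) > 1/\phi_k + 1/(2\bar L D^2)$, where the last strict inequality uses $\phi_k > \bar L D^2 \Rightarrow 1/\phi_k < 1/(\bar L D^2)$. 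Thus the invariant $1/\phi_{k+1} - 1/\phi_k \ge 1/(2\bar L D^2)$ holds unconditionally, and telescoping from $k=0$ gives $1/\phi_k \ge 1/\phi_0 + k/(2\bar L D^2)$, which is exactly the first bound in \eqref{btguarantee}; the strict bound by $2\bar L D^2/k$ then follows from $1/\phi_0 > 0$.

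The main obstacle I anticipate is choosing the analysis-step $\alpha$. The textbook choice $\min\{G_k/(\bar L D^2),\, 1\}$ mirrors the algorithmic rule \eqref{btrule} exactly, but leads to a recursion driven by $G_k$, and converting it to one on $\phi_k$ would require a reverse bound $G_k \lesssim \phi_k$ which need not hold, since $B_k$ can drift well above the current Wolfe lower bound $B_k^w$ as the algorithm progresses. Parameterizing the analysis step by $\phi_k$ itself bypasses this, at the small price of handling the large-$\phi_k$ branch separately via the one-shot bound $\phi_{k+1} \le \bar L D^2/2$; once that is in place, the two-case reciprocal telescoping is routine.
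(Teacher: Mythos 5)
Your proof is correct, and it takes a genuinely different route from the paper's. The paper analyzes the step the algorithm actually takes: writing $G_k := \nabla f(x_k)^T(x_k-\tilde x_k)$, $C_k := \bar L\|\tilde x_k-x_k\|^2$ and $r_k := f(x_k)-B_k$ (your $\phi_k$), it splits on whether \eqref{btrule} is interior ($G_k\le C_k$) or clipped at $1$ ($G_k>C_k$); in the interior case it converts the decrease $G_k^2/(2C_k)$ into $r_kr_{k+1}/(2C_k)$ via $G_k\ge r_k\ge r_{k+1}$ and divides, while the clipped case requires a further sub-split into $k=0$ and $k\ge1$ with some delicate algebra. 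You instead observe that both the exact line-search and the rule \eqref{btrule} minimize the quadratic model $f(x_k)-\alpha G_k+\tfrac{1}{2}\alpha^2C_k$ over $[0,1]$, so $f(x_{k+1})$ is dominated by that model at \emph{any} $\alpha\in[0,1]$ of your choosing; you then insert the analysis-only step $\alpha=\min\{\phi_k/(\bar LD^2),1\}$ driven by the bound gap $\phi_k$ rather than the Wolfe gap $G_k$ (and you correctly diagnose why parameterizing by $G_k$ would fail, since $B_k$ can exceed $B_k^w$), splitting instead on $\phi_k\le\bar LD^2$ versus $\phi_k>\bar LD^2$. Your interior case yields the clean recursion $\phi_{k+1}\le\phi_k\left(1-\phi_k/(2\bar LD^2)\right)$ handled by $(1-x)^{-1}\ge1+x$, and your boundary case collapses to the one-shot bound $\phi_{k+1}\le\bar LD^2/2$, which makes the invariant immediate and eliminates the paper's $k=0$ versus $k\ge1$ sub-cases entirely. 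What you give up is the per-iteration sharpness of working with $C_k$ (which can be much smaller than $\bar LD^2$), but since the paper discards that sharpness in the final statement anyway, nothing is lost in the guarantee. The only loose ends are the degenerate events $\phi_k=0$ or $\phi_{k+1}=0$, where the reciprocal recursion should be read as holding vacuously, and the one-clause observation $B_k\le f^*$ needed for the first inequality of \eqref{btguarantee}; both are harmless.
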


\noindent {\bf Proof:}
The first inequality of \eqref{btguarantee} follows from the fact that $B_k \ge f^*$, and the third inequality follows from the fact that $f(x^0) \ge f^* \ge B_0 $.  The second inequality can be rewritten as:
$$\frac{1}{f(x_{k}) - B_k} \ge \frac{1}{f(x_0) - B_0} + \frac{k}{2\bar L D^2}  \ , $$ which states that the reciprocal of the optimality bound gap grows at least according to the indicated linear function in $k$.  The above inequality holds trivially for $k=0$, and hence to prove the second inequality of \eqref{btguarantee} it suffices to show that:
\begin{equation}\label{snrc}\frac{1}{f(x_{k+1}) - B_{k+1}} \ge \frac{1}{f(x_k) - B_k} + \frac{1}{2\bar L D^2}  \ \ \ \ \mathrm{for~all~} k \ge 0 \ , \end{equation}
whose proof is given in Appendix \ref{swapoff}, and wherein the monotonicity of $f(x_k)$ is also proved. \qed

In addition to being the crux of the proof of \eqref{btguarantee}, we will also use inequality \eqref{snrc} and related inequalities as the basis for choosing among candidate directions in the in-face extension of Frank-Wolfe that we will develop in Section \ref{qwerty}.

\subsection{Away Steps}\label{sec:away-steps}
In \cite{wolfe} Wolfe introduced the concept of an ``away step'' in a modified version of the Frank-Wolfe method, and Gu\'{e}lat and Marcotte \cite{gm} provided a modification thereof and an extensive treatment of the convergence properties of the away-step-modified Frank-Wolfe method, including eventual linear convergence of the method when the objective function is strongly convex, the feasible region is polyhedral, and a form of strict complementarity holds.  Quite recently there has been much renewed interest in the Frank-Wolfe method with away steps, with most of the focus being on demonstrating global linear convergence with computational guarantees for a particular ``atomic" version of \cite{gm}, see Lacoste-Julien and Jaggi \cite{simon}, Beck and Shtern \cite{beck2015linearly}, and Pe\~{n}a et al. \cite{pena2015neumann}.

Algorithm \ref{GMalgorithm} presents the modified Frank-Wolfe method with away steps as developed in \cite{gm}.  The algorithm needs to work with the minimal face of a point $x \in S$, which is the smallest face of $S$ that contains the point $x$; here we use the notation ${\cal F}_S(x)$ to denote the minimal face of $S$ which contains $x$.  Step (2.) of the modified Frank-Wolfe method is the ``away step'' computation, where $\check x_k$ is the point on the current minimal face ${\cal F}_S(x_k)$ that is farthest along the ray from the ``bad'' solution $\hat x_k$ through the current point $x_k$.  Step (3.) of the modified method is the regular Frank-Wolfe step computation, which is called a ``toward step'' in \cite{gm}.  Notice that implementation of the away-step modified Frank-Wolfe method depends on the ability to characterize and work with the minimal face ${\cal F}_S(x_k)$ of the iterate $x^k$. When $S$ is not a polytope this minimal face capability is very much dependent on problem-specific knowledge of the structure of the set $S$.


\begin{algorithm}
\caption{Modified Frank-Wolfe Method with Away Steps, for optimization problem \eqref{poi3}}\label{GMalgorithm}
\begin{algorithmic}
\STATE Initialize at $x_0 \in S$, (optional) initial lower bound $B_{-1}$, $k \gets 0$ .\\
$ \ $ \\
At iteration $k$:
\STATE 1. Compute $\nabla f (x_k)$ .
\STATE 2. Compute $\hat x_k \gets  \arg\max\limits_{x}\{\nabla f (x_k)^Tx : x \in {\cal F}_S(x_k)\}$ .\\
\ \ \ \ \ \ \ \ \ \ $\alpha_k^{\mathrm{stop}} \gets \arg\max\limits_{\alpha}\{\alpha : x_k + \alpha(x_k - \hat x_k) \in {\cal F}_S(x_k)\}$ .\\
\ \ \ \ \ \ \ \ \ \ $\check x_k \gets x_k + \alpha_k^{\mathrm{stop}}(x_k - \hat x_k)$ . \\
\STATE 3. Compute $\tilde x_k \gets \arg\min\limits_{x}\{\nabla f (x_k)^Tx : x \in S\}$ .\\
\ \ \ \ \ \ \ \ \ \ $B^w_k \leftarrow f(x_k) + \nabla f (x_k)^T(\tilde x_k - x_k)$ .\\
\ \ \ \ \ \ \ \ \ \ Update best bound: $B_k \leftarrow \max\{B_{k-1}, B^w_k\}$ .
\STATE 4. Choose descent direction:  \\
\ \ \ \ \ \ \ \ \ \ If $\nabla f(x_k)^T (\tilde x_k - x_k) \le \nabla f(x_k)^T (x_k - \hat x_k)$, then $d_k \gets \tilde x_k - x_k$ and $\bar\beta_k \gets 1$ ; \\
\ \ \ \ \ \ \ \ \ \ Else $d_k \gets x_k - \hat x_k$ and $\bar\beta_k \gets \alpha_k^{\mathrm{stop}}$ .

\STATE 5. Set $x_{k+1} \gets x_k + \bar{\alpha}_k d_k$, where $\bar{\alpha}_k \in [0,\bar\beta_k]$ .
\end{algorithmic}
\end{algorithm}

\noindent The convergence of the modified Frank-Wolfe method is proved in Theorem 4 of \cite{gm} under the assumption that $\bar\alpha_k$ in  Step (5.) is chosen by exact line-search; however a careful review of the proof therein shows that convergence is still valid if one uses a step-size rule in the spirit of \eqref{btrule} that uses the quadratic upper-approximation of $f(\cdot)$ using $L$ or $\bar L \ge L$.  The criterion in Step (4.) of Algorithm \ref{GMalgorithm} for choosing between the regular Frank-Wolfe step and the away step seems to tailor-made for the convergence proof in \cite{gm}.  In examining the proof of convergence in \cite{gm}, one finds the fact that $\hat x_k$ is an extreme point is not relevant for the proof, nor even is the property that $\hat x_k$ is a solution of a linear optimization problem.  Indeed, this begs for a different way to think about both generating and analyzing away steps, which we will do shortly in Subsection \ref{qwerty}.

\noindent {\bf Away-steps are {\em not} affine-invariant.}  The feasible region $S$ of \eqref{poi3} can always be (implicitly) expressed as $S = \conv(\mathcal{A})$ where $\mathcal{A}$ is a collection of points in $S$ that includes all of the extreme points of $S$.  In fact, in many current applications of Frank-Wolfe and its relatives, $S$ is explicitly constructed as $S := \conv(\mathcal{A})$ for a given collection $\mathcal{A}$ whose members are referred to as  ``atoms"; and each atom $\tilde x \in \mathcal{A}$ is a particularly ``simple" point (such as a unit coordinate vector $\pm e^i$, a rank-$1$ matrix, etc.). Let us consider the (possibly infinite-dimensional) vector space $V := \{\alpha \in \mathbb{R}^{\mathcal{A}} : \text{support}(\alpha) \text{ is finite}\}$, and define the simplicial set $\Delta_{\mathcal{A}}$ by:
$$ \Delta_{\mathcal{A}} := \left\{\alpha \in V : \alpha \ge 0, \ \sum_{\tilde x \in \mathcal{A}} \alpha_{\tilde x} = 1\right\} \ , $$
and consider the linear map $M(\cdot) : \Delta_{\mathcal{A}} \to S$ such that $M(\alpha) := \sum_{\tilde x \in \mathcal{A}} \alpha_{\tilde x}\tilde x$.  Then it is obvious that the following two optimization problems are equivalent:
\begin{equation}\label{big-simplex}
\begin{array}{rcl}
 \min\limits_{x \in S} f(x) \ \ & \equiv & \ \ \min\limits_{\alpha \in \Delta_{\mathcal{A}}} f(M(\alpha)) \ ,
\end{array}\end{equation} where the left-side is our original given problem of interest \eqref{poi3} and the right-side is its re-expression using the convex weights $\alpha \in \Delta_{\mathcal{A}}$ as the variables.  Furthermore, it follows from the fundamental affine-invariance of the regular Frank-Wolfe method (Algorithm \ref{algo-vfw}) as articulated by Jaggi \cite{jaggi2013revisiting} that the Frank-Wolfe method applied to the left-side problem above is equivalent (via the linear mapping $M(\cdot)$) to the Frank-Wolfe method applied to the right-side problem above.  However, this affine invariance property does {\em not} extend to the away-step modification of the method, due to the fact that the facial structure of a convex set is not affine invariant -- not even so in the case when $S$ is a polytope.  This is illustrated in Figure \ref{fig:atomic}.  The left panel shows a polytopal feasible region $S \subset \mathbb{R}^3$ with ${\cal F}_S(x_k)$ highlighted.  The polytope $S$ has $10$ extreme points.  The right panel shows ${\cal F}_S(x_k)$ by itself in detail, wherein we see that $x_k = .25 \tilde x_1 + .25 \tilde x_2 + .50 \tilde x_3$ (among several other combinations of other extreme points of ${\cal F}_S(x_k)$ as well).  Let us now consider the atomic expression of the set $S$ using the $10$ extreme points $S$ and instead expressing our problem in the format of the right-side of \eqref{big-simplex}, wherein the feasible region is the unit simplex in $\mathbb{R}^{10}$, namely $\Delta_{10}:= \{\alpha \in \mathbb{R}^{10} : \alpha \ge 0, \ e^T\alpha = 1\}$ where $e=(1, \ldots, 1)$ is the vector of ones.  If the current iterate $x_k$ is given the atomic expression $\alpha_k = (.25,.25,.50, 0 , 0,0,0,0,0,0)$, then the minimal face ${\cal F}_{\Delta_{10}}(\alpha_k)$ of $\alpha_k$ in $\Delta_{10}$ is the sub-simplex $\{\alpha \in \mathbb{R}^{10} : \alpha \ge 0, \ e^T\alpha = 1, \ \alpha_4= \cdots \alpha_{10} = 0\}$, which corresponds back in $S \subset \mathbb{R}^3$ to the narrow triangle in the right panel of Figure \ref{fig:atomic}, and which is a small subset of the pentagon corresponding to the minimal face ${\cal F}_S(x_k)$ of $x_k$ in $S$.  Indeed, this example illustrates the general fact that the faces of the atomic expression of $S$ will always correspond to subsets of the faces of the facial structure of $S$.  Therefore, away-step sub-problem optimization computations using the original representation of $S$ will optimize over larger subsets of $S$ than will the corresponding computations using the atomic re-expression of the problem.  Indeed, we will show in Section \ref{compcomp} in the context of matrix completion that by working with the original representation of the set $S$ in the setting of using away-steps, one can obtain significant computational savings over working with the atomic representation of the problem.

Last of all, we point out that the away-step modified Frank-Wolfe methods studied by Lacoste-Julien and Jaggi  \cite{simon}, Beck and Shtern \cite{beck2015linearly}, and Pe\~{n}a et al. \cite{pena2015neumann} can all be viewed as applying the away-step method (Algorithm \ref{GMalgorithm}) to the ``atomic'' representation of the optimization problem as in the right-side of \eqref{big-simplex}.
\begin{figure}[h!]
\centering
\scalebox{.8}{\begin{tabular}{c c c}
\includegraphics[width=0.38\textwidth,height=0.38\textheight, trim= 0mm -5mm 0mm 0mm, clip=true, keepaspectratio=true]{\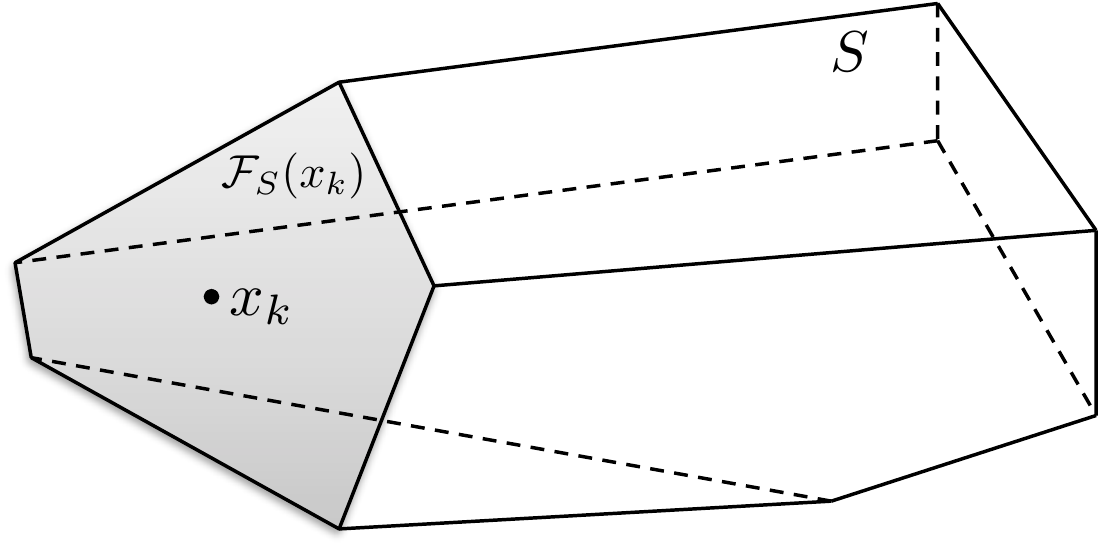}
&
\hspace{10mm} &
\includegraphics[width=0.3\textwidth,height=0.3\textheight, trim= 0mm 0mm 0mm 0mm, clip=true, keepaspectratio=true]{\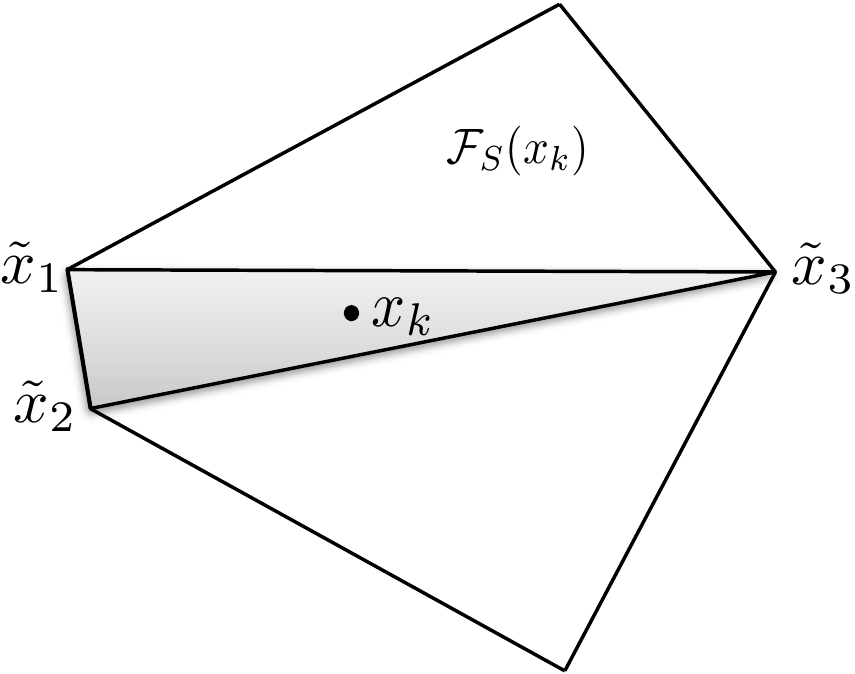}
\end{tabular}}
\caption{Illustration that facial structure of a polytope is not affine invariant.}\label{fig:atomic}
\end{figure}

\subsection{An ``In-Face'' Extended Frank-Wolfe Method}\label{qwerty}

\noindent Here we present an ``in-face'' extension of the Frank-Wolfe method, that is significantly more general than the away-step method of Wolfe \cite {wolfe} and Gu\'{e}lat and Marcotte \cite{gm} (Algorithm \ref{GMalgorithm}), and its atomic version studied by by Lacoste-Julien and Jaggi \cite{simon}, Beck and Shtern \cite{beck2015linearly}, and Pe\~{n}a et al. \cite{pena2015neumann}.  The method is motivated by the desire to compute and work with points $x$ that have specific structure, usually sparsity (in the case when $x$ is a vector or matrix) or low-rank (in the case when $x$ is a matrix).   More generally, we will think of the structure as being related to the dimension of the minimal face ${\cal F}_S(x)$ of $S$ containing $x$.  The algorithm is designed to balance progress towards two different goals, namely (i) progress towards optimizing the objective function, and (ii) the aim of having the iterates lie in low-dimensional faces of $S$.  In the case of the matrix completion problem \eqref{easy} in particular, if an iterate lies in a low-dimensional face of $S$ then the iterate will have low rank (see Theorem \ref{facial}).  Such low rank is advantageous not only because we want the output solution to have low rank, but also because a low-rank iterate yields a substantial reduction in the computation costs at that iteration.  This last point will be further developed and exploited in Sections \ref{comp} and \ref{compcomp}.

We present our ``In-Face Extended Frank-Wolfe Method'' in Algorithm \ref{RRa}.  At Step (2.) of each iteration the algorithm works with an ``in-face'' direction $d_k$ which will keep the next candidate point in the current minimal face ${\cal F}_S(x_k)$.  This is equivalent to requiring that $x_k + d_k$ lies in the affine hull of ${\cal F}_S(x_k)$, which is denoted by $\ahull({\cal F}_S(x_k))$.  Other than the affine hull condition, the direction $d_k$ can be {\em any} descent direction of $f(\cdot)$ at $x_k$ if such a direction exists.  The candidate iterate $x^B_k$ is generated by stepping in the direction $d_k$ all the way to the relative boundary of the minimal face of the current point $x_k$.  The point  $x^A_k$ is the candidate iterate generated using the in-face direction and a suitable step-size $\bar \beta_k$, perhaps chosen by exact line-search or by a quadratic approximation rule.  In Steps (3a.) and (3b.) the algorithm applies criteria for choosing which, if any, of $x^B_k$ or $x^A_k$ to accept as the next iterate of the method.  If the criteria are not met for either $x^B_k$ or $x^A_k$, then the method computes a regular Frank-Wolfe step in Step (3c.) and updates the lower bound $B_k$.

\begin{algorithm}
\caption{In-Face Extended Frank-Wolfe Method for optimization problem \eqref{poi3}} \label{RRa}
\begin{algorithmic}
\STATE Initialize at $x_0 \in S$, (optional) initial lower bound $B_{-1}$, $k \gets 0$ .\\
Choose $\bar L \ge L$, $\bar D \ge D$, and constants $\gamma_1$, $\gamma_2$ satisfying $0 \le \gamma_1 \le \gamma_2 \le 1$ .\\
$ \ $ \\
At iteration $k$:
\STATE 1. Compute $\nabla f (x_k)$ .  $B_k \gets B_{k-1}$ .
\STATE 2. Compute direction $d_k$ for which $x_k + d_k \in \ahull({\cal F}_S(x_k))$ and $\nabla f(x_k)^Td_k < 0$.  (If no $d_k$ exists, go to Step (3c.).)\\
\ \ \ \ \ \ \ \ \ \ $\alpha_k^{\mathrm{stop}} \gets \arg\max\limits_{\alpha}\{\alpha : x_k + \alpha d_k \in {\cal F}_S(x_k)\}$ .\\
\ \ \ \ \ \ \ \ \ \ $x^B_k := x_k + \alpha_k^{\mathrm{stop}} d_k$ . \\
\ \ \ \ \ \ \ \ \ \ $x^A_k := x_k + \bar\beta_k d_k$ where $\bar\beta_k \in [0,\alpha_k^{\mathrm{stop}}]$ . \\
\STATE 3. Choose next iterate:  \\
\ \ \ \ \ \ \ \ \ \ (a.) (Go to a lower-dimensional face.) \\
\ \ \ \ \ \ \ \ \ \ \ \ \ \ \ \ \ \ \ \ If $ \frac{1}{f(x^B_k) - B_k } \ge \frac{1}{f(x_k) - B_k } + \frac{\gamma_1}{2\bar L \bar D^2} \ , $ set $x_{k+1} \gets x_k^B$. \\
\ \ \ \ \ \ \ \ \ \ (b.) (Stay in current face.) \\
\ \ \ \ \ \ \ \ \ \ \ \ \ \ \ \ \ \ \ \ Else if $ \frac{1}{f(x^A_k) - B_k } \ge \frac{1}{f(x_k) - B_k } + \frac{\gamma_2}{2\bar L \bar D^2} \ , $ set $x_{k+1} \gets x_k^A$.\\
\ \ \ \ \ \ \ \ \ \ (c.) (Do regular FW step and update lower bound.)  Else, compute:  \\
\ \ \ \ \ \ \ \ \ \ \ \ \ \ \ \ \ $\tilde x_k \gets \arg\min\limits_{x}\{\nabla f (x_k)^Tx : x \in S\}$ . \\
\ \ \ \ \ \ \ \ \ \ \ \ \ \ \ \ \ $x_{k+1} \gets x_k + \bar\alpha_k (\tilde x_k - x_k)$ where $\bar\alpha_k \in [0,1]$ . \\
\ \ \ \ \ \ \ \ \ \ \ \ \ \ \ \ \ $B^w_k \leftarrow f(x_k) + \nabla f (x_k)^T(\tilde x_k - x_k)$, \ $B_k \leftarrow \max\{B_{k-1}, B^w_k\}$ .

\end{algorithmic}
\end{algorithm}

Let us now discuss a few strategies for computing in-face directions. One recovers the away-step direction of the method of Gu\'{e}lat and Marcotte \cite{gm} by choosing:
\begin{equation}\label{gm-strategy}
d_k \gets x_k - \hat x_k \ , \ \ \ \text{where} \ \ \ \hat x_k \gets  \arg\max\limits_{x}\{\nabla f (x_k)^Tx : x \in {\cal F}_S(x_k)\} \ .
\end{equation}
Another natural way to compute a suitable $d_k$, that is computationally facile for relatively low-dimensional faces and for certain problem instances (including matrix completion), is to directly solve for an (approximately) optimal objective function solution over the low-dimensional face ${\cal F}_S(x_k)$ and thereby set:
\begin{equation}\label{mazumder}
d_k \gets x^M_k - x_k \ , \ \ \ \text{where} \ \ \ x^M_k \gets \arg\min_x \{f(x) : x \in {\cal F}_S(x_k) \} \ .
\end{equation}
Note that in this case, we may naturally set $\bar\beta_k := 1$. Another related type of in-face direction that may be of interest is to consider a regular Frank-Wolfe step within ${\cal F}_S(x_k)$, whereby we select:
\begin{equation}\label{toward-strategy}
d_k \gets \tilde x_k^F - x_k \ , \ \ \ \text{where} \ \ \ \tilde x_k^F \gets \arg\min\limits_{x}\{\nabla f (x_k)^Tx : x \in {\cal F}_S(x_k)\} \ .
\end{equation}
One may interpret this ``in-face Frank-Wolfe step" as a single iteration of the Frank-Wolfe method applied to the subproblem in \eqref{mazumder}. As we elaborate in Section \ref{compcomp} when discussing the practical merits of these approaches, our main interests are in the away-step strategy \eqref{gm-strategy} and the full optimization strategy \eqref{mazumder}.  Both of these in-face Frank-Wolfe step strategies lead to significant computational advantages over the regular Frank-Wolfe method, as will be shown in Section \ref{compcomp}.

One immediate advantage of the In-Face Extended Frank-Wolfe Method (Algorithm \ref{RRa}) compared to the away-step modified Frank-Wolfe method of Gu\'{e}lat and Marcotte \cite{gm} (Algorithm \ref{GMalgorithm}) has to do with the number and sizes of linear optimization sub-problems that are solved.  Algorithm \ref{GMalgorithm} needs to solve two linear optimization subproblems at each iteration -- a ``small'' subproblem  on the minimal face ${\cal F}_S(x_k)$ and a ``large'' subproblem on the entire set $S$.  In contrast, even when computing directions using away-step computations, algorithm \ref{RRa} must solve the ``small'' linear optimization problem on the minimal face ${\cal F}_S(x_k)$, but the method will only need to solve the ``large'' subproblem on the entire set $S$ if it needs to process Step (3c.).  The computational advantage from not having to solve the ``large'' subproblem at every iteration will be shown in Section \ref{compcomp}.

We now discuss the criteria that are used in Step (3.) to choose between the next step $x^B_k$ that lies in the relative boundary of the current minimal face ${\cal F}_S(x_k)$, the step $x^A_k$ that does not necessarily lie in the relative boundary of the current minimal face ${\cal F}_S(x_k)$, and a regular Frank-Wolfe step.  We see from Step (3.) of Algorithm \ref{RRa} that a regular Frank-Wolfe step will be chosen as the next iterate unless the criteria of either Step (3a.) or (3b.) are met.  The criteria in Step (3a.) is met if $x^B_k$ (which lies on the relative boundary of ${\cal F}_S(x_k)$ by virtue of the definition of $\alpha_k^{\mathrm{stop}}$) provides sufficient decrease in the optimality gap as measured with the criterion:
$$\frac{1}{f(x^B_k) - B_k } \ge \frac{1}{f(x_k) - B_k } + \frac{\gamma_1}{2\bar L \bar D^2} \ . $$ The criteria in Step (3b.) is met if $x^A_k$ provides sufficient decrease in the optimality gap as measured similar to above but using $\gamma_2$ rather than $\gamma_1$.  Since $\gamma_1 \le \gamma_2$, Step (3a.) requires a lesser decrease in the optimality bound gap than does Step (3b.).

In settings when we strongly desire to compute iterates that lie on low-dimensional faces (as in the low-rank matrix completion problem \eqref{easy}), we would like the criteria in Steps (3a.) and (3b.) to be relatively easily satisfied (perhaps with it being even easier to satisfy the criteria in Step (3a.) as this will reduce the dimension of the minimal face).  This can be accomplished by setting the values of $\gamma_1$ and $\gamma_2$ to be lower rather than higher.  Indeed, setting $\gamma_1=0$ ensures in Step (3a.) that the next iterate lies in a lower-dimensional face whenever $x^B_k$ (which by definition lies in a lower dimensional face than $x_k$) does not have a worse objective function value than $f(x_k)$.  Also, if one sets $\gamma_2$ to be smaller, then the criteria in Step (3b.) is more easily satisfied, which ensures that the new iterate will remain in the current face ${\cal F}_S(x_k)$ as desired when the criterion of Step (3b.) is satisfied.

As we have discussed, the ability to induce solutions on low-dimensional faces by setting $\gamma_1$ and $\gamma_2$ to have low values can be extremely beneficial.  However, this all comes at a price in terms of computational guarantees, as we now develop.  Before presenting the computational guarantee for Algorithm \ref{RRa} we first briefly discuss step-sizes; the step-size $\bar\beta_k$ for steps to the in-face point $x_k^A$ are determined in Step (2.), and the step-size $\bar\alpha_k$ for regular Frank-Wolfe steps is chosen in Step (3c.).  One strategy is to choose these step-sizes using an exact line-search if the line-search computation is not particularly burdensome (such as when $f(\cdot)$ is a quadratic function).  Another strategy is to determine the step-sizes according to the quadratic upper approximation of $f(\cdot)$ much as in Theorem \ref{QAtheorem}, which in this context means choosing the step-sizes as follows:
\begin{equation}\label{letat}\bar\beta_k := \min\left\{\frac{-\nabla f(x_k)^T d_k}{\bar L \|d_k\|^2} , \  \alpha_k^{\mathrm{stop}}\right\} \  \  \ , \ \ \ \bar\alpha_k := \min\left\{\frac{\nabla f (x_k)^T(x_k - \tilde x_k)}{\bar L\|x_k - \tilde x_k\|^2}, \  1 \right\} \ .
\end{equation}
Let $N^a_k$, $N^b_k$, and $N^c_k$ denote the number of times within the first $k$ iterations that the iterates are chosen according to the criteria in Steps (3a.), (3b.), and (3c.), respectively.  Then $k = N^a_k + N^b_k + N^c_k$, and we have the following computational guarantee.

\begin{thm}\label{RRguarantee} Suppose that the step-sizes used in Algorithm \ref{RRa} are determined either by exact line-search or by \eqref{letat}.  After $k$ iterations of Algorithm \ref{RRa} it holds that:
$$f(x_k) - f^* \le f(x_k) - B_k \le \frac{1}{\displaystyle\frac{1}{f(x_0) - B_0} + \frac{\gamma_1 N^a_k}{2\bar L \bar D^2} + \frac{\gamma_2 N^b_k}{2\bar L \bar D^2} + \frac{N^c_k}{2\bar L \bar D^2} } < \frac{2\bar L \bar D^2}{\gamma_1 N^a_k + \gamma_2 N^b_k + N^c_k} \ . $$
\end{thm}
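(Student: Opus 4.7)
The plan is to imitate the proof strategy of Theorem \ref{QAtheorem}: establish a per-iteration reciprocal-gap increase of the form
$$\frac{1}{f(x_{k+1})-B_{k+1}} \;\ge\; \frac{1}{f(x_k)-B_k} + \frac{c_k}{2\bar L \bar D^2}\,,$$
where $c_k \in \{\gamma_1,\gamma_2,1\}$ depending on which branch of Step (3) of Algorithm \ref{RRa} is executed at iteration $k$, and then telescope over $k$ iterations. A standing fact used throughout is that $B_k$ is monotone non-decreasing (only Step (3c.) can raise it), so $B_{k+1}\ge B_k$ and hence $1/(f(x_{k+1})-B_{k+1}) \ge 1/(f(x_{k+1})-B_k)$ whenever both quantities are positive.

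The three cases are handled as follows. If Step (3a.) is taken at iteration $k$, then $x_{k+1}=x_k^B$ and the branch is only entered when its triggering inequality with $c_k=\gamma_1$ holds; substituting $x_{k+1}=x_k^B$ and then using monotonicity of $B_k$ to replace $B_k$ by $B_{k+1}$ on the left yields the desired per-iteration increment. Step (3b.) is identical, yielding $c_k=\gamma_2$. For Step (3c.), the update is a standard Frank-Wolfe step with step-size from either exact line-search or the rule \eqref{letat}, together with the update $B_{k+1}\gets\max\{B_k, B^w_k\}$. This is precisely the setting of inequality \eqref{snrc} proved in Appendix \ref{swapoff} for Theorem \ref{QAtheorem}, so the per-iteration increment with $c_k=1$ follows directly from that result, combined with $\bar D\ge D$ to pass from $D^2$ to $\bar D^2$ in the denominator.

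Next I would telescope the per-iteration inequalities from iteration $0$ through iteration $k-1$ and collect the contributions according to the counters $N_k^a + N_k^b + N_k^c = k$, obtaining
$$\frac{1}{f(x_k)-B_k} \;\ge\; \frac{1}{f(x_0)-B_0} + \frac{\gamma_1 N_k^a + \gamma_2 N_k^b + N_k^c}{2\bar L \bar D^2}\,.$$
Inversion gives the middle inequality of the theorem. The left inequality $f(x_k)-f^*\le f(x_k)-B_k$ is just $B_k\le f^*$, and the strict right inequality follows by dropping the positive summand $1/(f(x_0)-B_0)$ from the denominator (valid as long as $f(x_0)>B_0$; otherwise $x_0$ is already optimal and the bound holds trivially). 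The base case $k=0$ of the induction reduces to the trivial identity.

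The only substantive work is the Step (3c.) increment, and that has already been carried out in the proof of Theorem \ref{QAtheorem}, so here it can simply be invoked; the remainder is bookkeeping. The main conceptual point is that the two tailor-made selection criteria in Steps (3a.) and (3b.) are literally the per-iteration reciprocal-gap increments themselves, which is precisely what allows the in-face extension to coexist with the original Frank-Wolfe convergence rate while only paying the multiplicative factors $\gamma_1$ and $\gamma_2$ on those iterations.
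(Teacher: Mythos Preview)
Your proposal is correct and follows essentially the same approach as the paper's own proof: a three-case per-iteration reciprocal-gap increment (with $c_k\in\{\gamma_1,\gamma_2,1\}$ depending on whether Step (3a.), (3b.), or (3c.) is executed), followed by induction/telescoping, with the Step (3c.) increment being exactly inequality \eqref{snrc} from the proof of Theorem \ref{QAtheorem}. Your explicit mention of using $\bar D\ge D$ to pass from $D^2$ to $\bar D^2$ in the Step (3c.) case is a detail the paper leaves implicit, but otherwise the arguments are identical.
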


\noindent {\bf Proof:}  The first inequality is true since $B_k \le f^*$, and the third inequality is true since $f(x_0) \ge B_0$, so we need only prove the second inequality, which can be equivalently written as:
\begin{equation}\label{leominster}\frac{1}{f(x_k) - B_k} \ge \ \displaystyle\frac{1}{f(x_0) - B_0} + \frac{\gamma_1 N^a_k}{2\bar L \bar D^2} + \frac{\gamma_2 N^b_k}{2\bar L \bar D^2} + \frac{N^c_k}{2\bar L \bar D^2}  \ .\end{equation} Notice that \eqref{leominster} is trivially true for $k=0$ since $N^a_k = N^b_k = N^c_k = 0$ for $k=0$.
Let $\Delta^k :=(f(x_k) - B_k)^{-1}$ denote the inverse objective function bound gap at iteration $k$.  Then if the next iterate is chosen by satisfying the criteria in Step (3a.), it holds that $\Delta^{k+1} \ge (f(x_{k+1}) - B_{k})^{-1} \ge \Delta^k  + \frac{\gamma_1}{2\bar L \bar D^2}$ where the first inequality derives from $B_{k+1} \ge B_k$ and the second inequality is from the criterion of Step (3a.).  Similarly, if the next iterate is chosen by satisfying the criteria in Step (3b.), it holds using similar logic that $\Delta^{k+1} \ge  \Delta^k  + \frac{\gamma_2}{2\bar L \bar D^2}$.   And if the next iterate is chosen in Step (3c.), namely we take a regular Frank-Wolfe step, then inequality \eqref{snrc} holds, which is $\Delta^{k+1} \ge  \Delta^k  + \frac{1}{2\bar L \bar D^2}$.  Applying induction then establishes \eqref{leominster}, which completes the proof. \qed

Here we see that choosing smaller values of $\gamma_1$ and $\gamma_2$ can have a detrimental effect on the progress of the algorithm in terms of the objective function optimality gap, while larger values ensure better convergence guarantees.  At the same time, smaller values of $\gamma_1$ and $\gamma_2$ are more effective at promoting iterates to lie on low-dimensional faces.  Thus there is a clear tradeoff between objective function optimality gap accuracy and low-dimensional structure, dictated by the values of $\gamma_1$ and $\gamma_2$. One strategy that is worth studying is setting $\gamma_1 = 0$ and $\gamma_2 $ to be relatively large, say $\gamma_2=1$ for example.  With these values of the parameters we take an in-face step in Step (3a.) (which lowers the dimension of the face of the iterate) whenever doing so will not adversely affect the objective function value. This and other strategies for setting $\gamma_1$ and $\gamma_2$ will be examined in Section \ref{compcomp}.



\noindent {\bf A simplified algorithm in the case of full optimization over the current minimal face}.  Let us further examine the dynamics of Algorithm \ref{RRa} in the case of \eqref{mazumder}, where we select the in-face direction by fully optimizing the objective function $f(\cdot)$ over the low-dimensional face ${\cal F}_S(x_k)$. Consider performing an in-face step in this case, i.e., suppose that the next iterate is chosen according to the criteria in Steps (3a.)/(3b.) (recall that we set $\bar\beta_k := 1$ in this case). Then, at the next iteration, Algorithm \ref{RRa} is guaranteed to select a regular Frank-Wolfe step via Step (3c.). Indeed, since the next iterate $x_{k+1}$ is chosen as the optimal solution over ${\cal F}_S(x_k)$, by definition there are no descent directions at $x_{k+1}$ that remain within ${\cal F}_S(x_{k+1}) \subseteq {\cal F}_S(x_{k})$ and thus no valid in-face directions to be selected. Here we see that the parameters $\gamma_1$ and $\gamma_2$ are superfluous -- a much more natural procedure is to simply alternate between regular Frank-Wolfe steps and fully optimizing over ${\cal F}_S(x_k)$. This bears some similarity to, but is distinct from, the ``fully corrective" variant of Frank-Wolfe, see, e.g., \cite{jaggi2013revisiting, harchaoui}. (Indeed, these two algorithms coincide if we consider this alternating procedure applied to the lifted problem \eqref{big-simplex}.) In this case, the following computational guarantee follows simply from Theorem \ref{QAtheorem}.

\begin{prop}\label{prop-alt}
Consider a slight variation of Algorithm \ref{RRa} that alternates between full optimizations \eqref{mazumder} over the current face ${\cal F}_S(x_k)$ and regular Frank-Wolfe steps, with step-size $\bar\alpha_k$ chosen either by exact line-search or by a quadratic approximation rule \eqref{letat}. For simplicity, consider one iteration to consist of both of these operations in sequence. Then, for all $k \geq 0$, it holds that:
\begin{equation*}
f(x_k) - f^* \le f(x_k) - B_k \le \frac{1}{\frac{1}{f(x_0) - B_0} + \frac{k}{2\bar L D^2}} \ \  < \ \ \frac{2\bar L D^2}{k} \ .
\end{equation*}
\end{prop}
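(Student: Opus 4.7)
The plan is to reduce Proposition~\ref{prop-alt} to the single-iteration recurrence~\eqref{snrc} underlying Theorem~\ref{QAtheorem}. The key observation is that the in-face full optimization is monotone non-increasing in $f(\cdot)$, so combined with a subsequent regular Frank-Wolfe step it makes at least as much progress on the inverse optimality bound gap as one regular Frank-Wolfe step alone.

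First I would introduce the auxiliary iterate $y_k := \argmin_{x \in {\cal F}_S(x_k)} f(x)$ produced by the in-face full optimization, so that $x_{k+1}$ is obtained from $y_k$ by a regular Frank-Wolfe step using exact line-search or the rule~\eqref{letat}. Because $x_k \in {\cal F}_S(x_k)$ and the in-face step does not touch the lower bound, we have $f(y_k) \le f(x_k)$ with the same $B_k$ carried into the subsequent Frank-Wolfe step.

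Next I would invoke inequality~\eqref{snrc}, which is established in Appendix~\ref{swapoff} for one regular Frank-Wolfe step starting from an arbitrary feasible point, and apply it to the Frank-Wolfe step from $y_k$ to $x_{k+1}$. This yields
$$
\frac{1}{f(x_{k+1}) - B_{k+1}} \;\ge\; \frac{1}{f(y_k) - B_k} + \frac{1}{2\bar L D^2} \;\ge\; \frac{1}{f(x_k) - B_k} + \frac{1}{2\bar L D^2},
$$
where the second inequality uses the monotonicity $f(y_k) \le f(x_k)$.

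Finally, a straightforward induction on $k$ with the trivial base case $k=0$ delivers
$$
\frac{1}{f(x_k) - B_k} \;\ge\; \frac{1}{f(x_0) - B_0} + \frac{k}{2\bar L D^2},
$$
which rearranges to the middle bound in the statement; the outer bounds $f(x_k) - f^* \le f(x_k) - B_k$ and the final strict inequality follow from $f^* \ge B_k$ and $f(x_0) \ge B_0$ respectively, exactly as in Theorem~\ref{QAtheorem}. The main (and rather mild) obstacle is verifying that~\eqref{snrc} is insensitive to the starting point within $S$; inspection of its derivation via the descent lemma~\eqref{lipschitz2} together with the Frank-Wolfe step-size rule confirms it applies verbatim starting from $y_k$ in place of $x_k$.
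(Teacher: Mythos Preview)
Your proposal is correct and mirrors exactly what the paper intends: the proposition is stated to ``follow simply from Theorem~\ref{QAtheorem},'' and your argument---in-face optimization is monotone in $f$, then a single Frank-Wolfe step yields the recurrence~\eqref{snrc}, then induct---is precisely that reduction. One small bookkeeping point: when you invoke~\eqref{snrc} for the step from $y_k$, the bound appearing on the right-hand side should be the one that already incorporates the Wolfe bound at $y_k$ (i.e., $B_{k+1}$ in your indexing, since $G_k \ge r_k$ in the appendix proof relies on $B_k \ge B_k^w$); but since $B_{k+1} \ge B_k$ you then immediately weaken to $\tfrac{1}{f(y_k)-B_k}$ and your chain goes through unchanged.
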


\section{Solving Matrix Completion Problems using the In-Face Extended Frank-Wolfe Method}\label{comp}

We now turn our attention to solving instances of \eqref{easy} using the the In-Face Extended Frank-Wolfe Method (Algorithm \ref{RRa}).  We work directly with the natural parameterization of variables as $m \times n$ matrices $Z \in \bbR^{m \times n}$ (although, as we discuss in Section \ref{svd-updating}, we utilize low-rank SVD updating to maintain the variables in an extremely memory efficient manner). Recall that the objective function of \eqref{easy} is $f(Z) := \tfrac{1}{2}\sum_{(i,j) \in \Omega} (Z_{ij}-X_{ij})^2$, whose gradient is $\nabla f(Z) = (Z-X)_\Omega$.  The feasible region of \eqref{easy} is $S={\cal B}_{N1}(0,\delta)$, which notation we shorten to ${\cal B} := {\cal B}_{N1}(0,\delta)$. We first discuss the specification and implementation issues in using Algorithm \ref{RRa} to solve \eqref{easy}.

We will fix the norm on $Z$ to be the nuclear norm $\|\cdot\|_{N1}$, whose dual norm is easily seen to be $\|\cdot\|_{N1}^* = \|\cdot\|_{N\infty}$.  Then it is plain to see that under the nuclear norm it holds that the Lipschitz constant of the objective function of \eqref{easy} is $L=1$. This follows since for any $Z,Y \in \mathbb{R}^{m \times n}$ we have:
\begin{align*}
\|\nabla f(Z) - \nabla f(Y)\|_{N\infty} ~&\leq~ \|\nabla f(Z) - \nabla f(Y)\|_{N2} ~=~ \|(Z-X)_\Omega-(Y-X)_\Omega \|_{F} \\
& \\
&\leq~ \|(Z-Y) \|_{F} ~=~ \|(Z-Y) \|_{N2} ~\leq~ \|(Z-Y) \|_{N1} \ .
\end{align*}
\noindent Since the feasible region of \eqref{easy} is $S={\cal B} := {\cal B}_{N1}(0,\delta)$ it follows that the diameter of $S$ is $D = 2\delta$.  Let us use the superscript $Z^k$ to denote the $k^{\mathrm{th}}$ iterate of the algorithm, to avoid confusion with the subscript notation $Z_{ij}$ for indices of the $(i,j)^{\mathrm{th}}$ component of $Z$.

\subsection{Characterization of faces of the nuclear norm ball}\label{faces}

In order to implement Algorithm \ref{RRa} we need to characterize and work with the minimal face of ${\cal B} = {\cal B}_{N1}(0,\delta)$ containing a given point.  Let  $\bar Z \in {\cal B}$ be given.  The minimal face of ${\cal B}$ containing $\bar Z$ is formally notated as ${\cal F}_{{\cal B}}(\bar Z)$.  We have the following characterization of ${\cal F}_{{\cal B}}(\bar Z)$ due to So \cite{So}:

\begin{thm}\label{facial} {\bf (So \cite{So})} Let $\bar Z \in {\cal B}$ have thin SVD $\bar Z = UDV^T$ and let $r=\rnk(\bar Z)$.  Let ${\cal F}_{{\cal B}}(\bar Z)$ denote the minimal face of ${\cal B}$ containing $\bar Z$.  If $\sum_{j=1}^r \sigma_j = \delta$, then $\bar Z \in \partial {\cal B}$ and it holds that:
$${\cal F}_{{\cal B}}(\bar Z) = \{ Z \in \mathbb{R}^{m\times n} : Z = UMV^T \ \mathrm{for~some~} M \in \mathbb{S}^{r\times r}, \ M \succeq 0, \ I \bullet M = \delta \} \ , $$
and $\dim({\cal F}_{{\cal B}}(\bar Z)) = r(r+1)/2 - 1$. Otherwise $\sum_{j=1}^r \sigma_j < \delta$ and it holds that ${\cal F}_{{\cal B}}(\bar Z) = {\cal B}$ and $\dim({\cal F}_{{\cal B}}(\bar Z)) = \dim(\mathcal{B}) = m \times n$. \qed
\end{thm}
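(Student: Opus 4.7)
The plan is to split on whether $\sum_{j=1}^r \sigma_j < \delta$ or $\sum_{j=1}^r \sigma_j = \delta$. The strict inequality case is immediate: if $\|\bar Z\|_{N1} < \delta$, then $\bar Z$ lies in the topological interior of $\mathcal{B}$, and since a point in the interior of a convex set is contained in no proper face, the minimal face is $\mathcal{B}$ itself and has dimension $mn$.

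For the boundary case $\sum_{j=1}^r \sigma_j = \delta$, let $F := \{UMV^T : M \in \mathbb{S}^{r\times r},\ M \succeq 0,\ I \bullet M = \delta\}$ denote the candidate face. First I would verify $F \subseteq \mathcal{B}$: for $Z = UMV^T \in F$, diagonalize $M = Q\Lambda Q^T$ with $\Lambda = \mathrm{Diag}(\lambda_1, \ldots, \lambda_r)$ and $\lambda_j \geq 0$; then $UQ$ and $VQ$ still have orthonormal columns, so $Z = (UQ)\Lambda(VQ)^T$ is (up to reordering) an SVD of $Z$, yielding $\|Z\|_{N1} = \sum_j \lambda_j = \tr(M) = \delta$. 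Second, I would exhibit $F$ as a subset of an exposed face of $\mathcal{B}$ via the supporting linear functional $C := UV^T$: this $C$ has its $r$ nonzero singular values all equal to $1$, so $\|C\|_{N\infty} = 1$ and hence $C \bullet Z \leq \|C\|_{N\infty}\|Z\|_{N1} \leq \delta$ on all of $\mathcal{B}$, while a direct trace computation gives $C \bullet (UMV^T) = \tr(M) = \delta$. Thus $F \subseteq H := \mathcal{B} \cap \{Z : C \bullet Z = \delta\}$, and $H$ is manifestly a face of $\mathcal{B}$.

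The main obstacle is the reverse inclusion $H \subseteq F$, which I would establish via the equality case of the von Neumann (Ky Fan) trace inequality. Writing $Z \in H$ in SVD form $Z = \tilde U \tilde D \tilde V^T$, the chain $\delta = C \bullet Z \leq \sum_j \sigma_j(C)\sigma_j(Z) \leq \|C\|_{N\infty}\|Z\|_{N1} \leq \delta$ must consist entirely of equalities. Since $C = UV^T$ has exactly $r$ nonzero singular values (all equal to $1$) with left and right singular vectors spanning $\mathrm{col}(U)$ and $\mathrm{col}(V)$ respectively, equality forces $\rnk(Z) \leq r$, and aligns the singular subspaces of $Z$ inside $\mathrm{col}(U)$ and $\mathrm{col}(V)$ in precisely the manner making $M := U^T Z V$ satisfy $Z = UMV^T$ with $M \succeq 0$ and $\tr(M) = \|Z\|_{N1} = \delta$. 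This places $Z \in F$, yielding $F = H$.

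Having shown $F$ is a face, I would identify it as the \emph{minimal} face containing $\bar Z$ by verifying $\bar Z \in \mathrm{ri}(F)$: indeed $\bar Z = UDV^T$ corresponds to $M = D \succ 0$ (all diagonal entries strictly positive), and $D$ lies in the relative interior of the spectrahedron $\bar{\mathcal{S}}_\delta^r = \{M \in \mathbb{S}^{r\times r} : M \succeq 0,\ I \bullet M = \delta\}$. The linear map $\Phi : M \mapsto UMV^T$ is injective on $\mathbb{S}^{r\times r}$ (by orthonormality of the columns of $U$ and $V$), so $\Phi$ carries $\mathrm{ri}(\bar{\mathcal{S}}_\delta^r)$ into $\mathrm{ri}(F)$ and identifies $F$ affinely with $\bar{\mathcal{S}}_\delta^r$. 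This gives $\dim(F) = \dim(\bar{\mathcal{S}}_\delta^r) = r(r+1)/2 - 1$, with the $-1$ coming from the single trace equality constraint.
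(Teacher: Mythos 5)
Your argument is correct, but the comparison here is asymmetric: the paper does not prove Theorem~\ref{facial} at all. It states the result with a terminal \qed and explicitly defers to Theorem~3 of So \cite{So}, remarking only that the present statement is a ``trivial extension'' of So's square-matrix version. So what you have written is a self-contained proof of something the paper outsources to the literature. Your structure is sound: the interior case is indeed immediate; for the boundary case, exhibiting $F$ inside the exposed face $H$ cut out by $C:=UV^T$ (using $\|C\|_{N\infty}=1$ and $C\bullet UMV^T=\tr(M)$), proving $H\subseteq F$ via the equality case of the trace inequality, and then certifying minimality by checking $\bar Z\in\mathrm{ri}(F)$ through the injective affine map $M\mapsto UMV^T$ onto the spectrahedron $\bar{\mathcal{S}}_\delta^r$ is exactly the right skeleton, and the dimension count $r(r+1)/2-1$ falls out correctly. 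The one place you compress is the equality analysis of the von Neumann inequality when $C$ has the fully degenerate spectrum $(1,\dots,1,0,\dots)$; the alignment of singular subspaces there requires some care. A slightly more self-contained route for that step, avoiding the multiplicity issue entirely, is the chain $\delta = \tr(U^TZV)\le\|U^TZV\|_{N1}\le\|Z\|_{N1}\le\delta$, where equality in the first inequality holds iff $M:=U^TZV$ is symmetric positive semidefinite and equality in the second iff $Z=UMV^T$ (no nuclear mass outside the compression), which delivers $Z\in F$ directly. Either way, your proof is a legitimate and complete substitute for the citation.
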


Theorem \ref{facial} above is a reformulation of Theorem 3 of So \cite{So}, as the latter pertains to square matrices ($m=n$) and also does not explicitly treat the minimal faces containing a given point, but is a trivial extension of So's theorem.

Theorem \ref{facial} explicitly characterizes the correspondence between the faces of the nuclear norm ball and low-rank matrices on its boundary. Note from Theorem \ref{facial} that if $\bar Z \in \partial{\cal B}$ and $r=\rnk(\bar Z)$, then ${\cal F}_{{\cal B}}(\bar Z)$ is a linear transformation of the $r \times r$ spectrahedron $\bar{\mathcal{S}}_\delta^r := \{M \in \mathbb{S}^{r\times r} : M \succeq 0, \ I \bullet M = \delta  \}$.  This property will be most useful as it will make it very easy to compute in-face directions, especially when $r$ is relatively small, as we will see in Section \ref{alex} and Section \ref{sowhat}.

\subsection{Linear optimization subproblem solution for regular Frank-Wolfe step}\label{linear_opt}

In Step (3c.) of Algorithm \ref{RRa} we need solve a linear optimization problem.  Here we show how this can be done efficiently.  We need to compute:
\begin{equation}\label{easy2}
\tilde{Z}^{k}  ~\gets~ \arg\min\limits_{Z \in {\cal B}_{N1}(0,\delta)} \ \nabla f(Z^k) \bullet Z \ .
\end{equation}
Then an optimal solution $\tilde Z^k$  is readily seen to be:
\begin{equation}\label{rank1}\tilde{Z}^{k} \gets - \delta \mathbf{u}_{k}\mathbf{v}^T_{k}\end{equation}
where
$\mathbf{u}_{k}$ and $\mathbf{v}_{k}$ denote the left and right singular vectors, respectively, of the matrix
$\nabla f(Z^k)$ corresponding to the largest singular value of $\nabla f(Z^k)$.  Therefore computing $\tilde{Z}^{k}$ in Step (3c.) is relatively easy so long as the computation of the largest singular value of $\nabla f(Z^k)$ and associated left and right eigenvalues thereof are easy to accurately compute. If $|\Omega|$ is relatively small, then there are practically efficient methods (such as power iterations) that can effectively leverage the sparsity of $\nabla f(Z^k)$.

\subsection{Strategies and computation of the in-face direction $D^k$}\label{alex}

Let $D^k$ denote the in-face direction computed in Step (2.) of Algorithm \ref{RRa}.  As suggested in Section \ref{qwerty}, we present and discuss two different strategies for generating a suitable $D^k$, namely (i) using an away-step approach \eqref{gm-strategy}, and (ii) directly solving for an optimal objective function solution over the low-dimensional face ${\cal F}_{\cal B}(Z^k)$ \eqref{mazumder}. In either case, computing $D^k$ requires working with the thin SVD of $Z^k$, which characterizes ${\cal F}_{\cal B}(Z^k)$ as stated in Theorem \ref{facial}. Of course, the thin SVD of $Z^k$ can be recomputed at every iteration, but this is generally very inefficient. As we expand upon in Section \ref{svd-updating}, the thin SVD of $Z^{k+1}$ can be efficiently \emph{updated} from the thin SVD of $Z^k$ by utilizing the structure of the regular Frank-Wolfe and in-face directions. For now, we simply assume that we have access to the thin SVD of $Z^k$ at the start of iteration $k$.

\noindent {\bf Away-step Strategy.}  Here we choose $D^k$ by setting $D^k \gets Z^k - \hat Z^k$ where $\hat Z^k$ is the solution of the linear optimization maximization problem over the current minimal face, as in Step (2.) of the away-step algorithm (Algorithm \ref{GMalgorithm}).  We compute the ``away-step point'' $\hat Z^k$ by computing:
\begin{equation}\label{pindyck}
\hat Z^k ~\gets~ \arg\max_{Z \in {\cal F}_{\cal B}(Z^k)} \ \nabla f(Z^k) \bullet Z \ ,
\end{equation}
and set $D^k \gets Z^k - \hat Z^k$.  To see how to solve \eqref{pindyck} efficiently, we consider two cases, namely when $Z^k \in \inside({\cal B})$ and when $Z^k \in \partial({\cal B})$.  In the case when $Z^k \in \inside({\cal B})$, then ${\cal F}_{\cal B}(Z^k) = {\cal B}$ and the optimal solution in \eqref{pindyck} is just the negative of the solution of \eqref{rank1}, namely $\hat Z^k = \delta \mathbf{u}_{k}\mathbf{v}^T_{k}$.

In the case when $Z^k \in \partial({\cal B})$, $\rnk(Z^k) = r$, and $Z^k$ has thin SVD $Z^k = UDV^T$, we use the characterization of ${\cal F}_{\cal B}(Z^k)$ in Theorem \ref{faces} to reformulate \eqref{pindyck} as:
\begin{equation}\label{poi5}
\hat Z^k ~\gets~ U\hat M^kV^T \ \ \ \mathrm{where} \ \ \ \hat M^k ~\gets~ \arg\max\limits_{M \in \bar{\mathcal{S}}_\delta^r} \ G^k \bullet M \ ,
\end{equation}
and where $G^k := \tfrac{1}{2}(V^T\nabla f(Z^k)^TU + U^T\nabla f(Z^k)V)$ so that $\nabla f(Z^k) \bullet UMV^T = G^k \bullet M$ for all $M \in \bar{\mathcal{S}}_\delta^r$.
An optimal solution to the subproblem in \eqref{poi5} is readily seen to be
\begin{equation}\label{rank11}\hat{M}^k \gets \delta \mathbf{u_k}\mathbf{u_k}^T\end{equation}
where $\mathbf{u}_{k}$ is the normalized eigenvector corresponding to the largest eigenvector of the $r \times r$ symmetric matrix $G^k$.  Therefore computing $\hat{Z}^{k}$ in \eqref{pindyck} is relatively easy so long as the computation of the largest eigenvalue of $G^k$ and associated eigenvector thereof are easy to accurately compute.  Furthermore, note that $\hat Z^k = U\hat M^kV^T = \delta U \mathbf{u_k}\mathbf{u_k}^TV^T$ is a rank-one matrix.

The development of the in-face Frank-Wolfe step strategy \eqref{toward-strategy} in this case is quite similar. Indeed, we simply replace the maximization in \eqref{poi5} with a minimization, which corresponds to a smallest eigenvalue computation, and set $D^k$ accordingly.

\noindent {\bf Direct Solution on the Minimal Face.}   In this strategy we use the alternating version of Algorithm \ref{RRa} described at the end of Section \ref{qwerty} and we choose $D^k$ by setting $D^k \gets \bar Z^k - Z^k$ where $\bar Z^k$ optimizes (exactly or perhaps only approximately) the original objective function $f(Z)$ over the current minimal face, under the assumption that such optimization can be done efficiently and accurately. Indeed, when $Z^k \in \inside({\cal B})$, then we default to the previous away-step strategy since optimizing over the minimal face is identical to the original problem \eqref{easy}. Otherwise, when $Z^k = UDV^T \in \partial({\cal B})$ we again use the characterization of ${\cal F}_{\cal B}(Z^k)$ in Theorem \ref{faces} to compute $\bar Z^k$ as:
\begin{equation}\label{berndt}
\bar Z^k ~\gets~ U\bar{M}^kV^T \ \ \ \mathrm{where} \ \ \ \bar M^k ~\gets~ \arg\min\limits_{M \in \bar{\mathcal{S}}_\delta^r} \ f(UMV^T) \ .
\end{equation}
Of course, it is only sensible to consider this strategy when  $Z^k$ has low rank, for otherwise \eqref{berndt} is nearly as difficult to solve as the original problem \eqref{easy} whose solution we seek to approximate using the In-face Extended Frank-Wolfe Method.  Since $f(\cdot)$ is a convex quadratic function, it follows that the subproblem in \eqref{berndt} is solvable as a semidefinite/second-order conic optimization problem and thus conic interior-point methods may be practical. Alternatively, one can approximately solve \eqref{berndt} by taking a number of steps of any suitably effective method, such as a proximal/accelerated first-order method \cite{tseng} (or even the Frank-Wolfe method itself).


\subsection{Computing the maximal step-size $\alpha_k^{\mathrm{stop}}$ in Step (2.)}\label{sowhat}  Here we describe how to efficiently compute the maximal step-size $\alpha_k^{\mathrm{stop}}$ in Step (2.) of Algorithm \ref{RRa}, which is determined as:
\begin{equation}\label{gibbons} \alpha_k^{\mathrm{stop}} \gets \arg\max\limits_{\alpha}\{\alpha : Z^k + \alpha D^k \in {\cal F}_{\cal B}(Z^k)\} \ . \end{equation}
Let us first assume that $Z^k \in \partial({\cal B})$.  We will utilize the SVD of the current iterate $Z^k = UDV^T$.  Using either the away-step strategy or the direct solution strategy for determining the in-face direction $D^k$ in Section \ref{alex}, it is simple to write $D^k = U\Delta V^T$ for an easily given matrix $\Delta \in \mathbb{S}^{r \times r}$.  Since $Z^k \in \partial({\cal B})$ and $Z^k + D^k \in {\cal F}_{\cal B}(Z^k)$ it holds that $I \bullet D = \delta$ and hence $I \bullet \Delta = 0$.  Using the characterization of ${\cal F}_{\cal B}(Z^k)$ in Theorem \ref{facial} it follows that \eqref{gibbons} can be reformulated as:
\begin{equation}\label{mit}
\alpha_k^{\mathrm{stop}} ~\gets~ \arg\max\limits_{\alpha,M}\{\alpha : UDV^T + \alpha U\Delta V^T = UMV^T, \ M \in \bar{\mathcal{S}}_\delta^r \} ~=~ \arg\max\limits_{\alpha}\{\alpha : D + \alpha \Delta \succeq 0 \}  \ .
\end{equation}
In the case when $D^k$ is chosen using the away-step approach, we have from \eqref{poi5} and \eqref{rank11} that $\Delta := D-\delta{\mathbf{u}_{k}}{\mathbf{u}_{k}}^T$ satisfies $D^k = Z^k - \hat{Z}^k = U\Delta V^T$. In this case the maximum $\alpha$ satisfying \eqref{mit} is easily seen to be $\alpha_k^{\mathrm{stop}} := \left(\delta{\mathbf{u}_{k}}^T D^{-1}{\mathbf{u}_{k}} -1\right)^{-1}$.  When $D^k$ is chosen by some other method, such as the direct solution method on the minimal face, the optimal solution of \eqref{mit} is seen to be $\alpha_k^{\mathrm{stop}} := -\left[\lambda_{\min}\left(D^{-\tfrac{1}{2}}\Delta D^{-\tfrac{1}{2}}\right)\right]^{-1}$.

In the case when $Z^k \in \inside({\cal B})$, then \eqref{gibbons} can be written as $\alpha_k^{\mathrm{stop}} \gets \arg\max\{\alpha : \|Z^k + \alpha D^k \|_{N1} \le \delta \}$, and we use binary search to approximately determine $\alpha_k^{\mathrm{stop}}$.

\subsection{Initial values, step-sizes, and computational guarantees}\label{there}

We initialize Algorithm \ref{RRa} by setting
\begin{equation}\label{Z0}Z^{0} \gets - \delta \mathbf{u}_{0}\mathbf{v}^T_{0}\end{equation}
where
$\mathbf{u}_{0}$ and $\mathbf{v}_{0}$ denote the left and right singular vectors, respectively, of the matrix
$\nabla f(0)$ corresponding to the largest singular value of $\nabla f(0)$.  This initialization corresponds to a ``full step" iteration of the Frank-Wolfe method initialized at $0$ and conveniently satisfies $\rnk(Z^0)=1$ and $Z^0 \in \partial {\cal B}$.
We initialize the lower bound as $B_{-1} \gets \max\left\{f(0) + \nabla f(0) \bullet Z^0, 0\right\}$, where the first term inside the max corresponds to the lower bound generated when computing $Z^0$ and the second term is a valid lower bound since $f^* \ge 0$. Moreover, this initialization has a provably good optimality gap, namely $f(Z^0) \le B_{-1} + 2\delta^2 \leq f^* + 2\delta^2 $, which follows from Proposition 3.1 of \cite{GF-FW}.

Because $f(\cdot)$ is a convex quadratic function, we use an exact line-search to determine $\bar\beta_k$ and $\bar\alpha_k$ in Steps (2.) and (3c.), respectively, since the line-search reduces to a simple formula in this case.

Utilizing the bound on the optimality gap for $Z^0$, and recalling that $L=1$ and $D=2\delta$, we have from Theorem \ref{RRguarantee} that the computational guarantee for Algorithm \ref{RRa} is:
$$f(Z^k) - B_k \le f(Z^k) - f^* \le \frac{1}{\displaystyle\frac{1}{f(Z^0) - B_0} + \frac{\gamma_1 N^a_k}{8\delta^2} + \frac{\gamma_2 N^b_k}{8\delta^2} + \frac{N^c_k}{8\delta^2} } \leq \frac{8\delta^2}{4 + \gamma_1 N^a_k + \gamma_2 N^b_k + N^c_k} \ . $$

\subsection{Efficiently Updating the Thin SVD of $Z^k$}\label{svd-updating}
At each iteration of Algorithm \ref{RRa} we need to access two objects related to the current iterate $Z^k$:  \emph{(i)} the current gradient $\nabla f(Z^k) = (Z^k-X)_\Omega$ (for solving the regular Frank-Wolfe linear optimization subproblem and for computing in-face directions), and \emph{(ii)} the thin SVD $Z^k=UDV^T$ (for computing in-face directions). For large-scale matrix completion problems, it can be very burdensome to store and access all $mn$ entries of the (typically dense) matrix $Z^k$. On the other hand, if $r := \rnk(Z^k)$ is relatively small, then storing the thin SVD of $Z^k$ requires only keeping track of $mr + r + nr$ entries. Thus, when implementing Algorithm \ref{RRa} as discussed above, instead of storing the entire matrix $Z^k$, we store in memory the thin SVD of $Z^k$ (i.e., the matrices $U, V$, and $D$), which we initialize from \eqref{Z0} and efficiently update as follows. Let $D^k$ denote the direction chosen by Algorithm \ref{RRa} at iteration $k \geq 0$, which is appropriately scaled so that $Z^{k+1} = Z^k + D^k$. To compute the thin SVD of $Z^{k+1}$, given the thin SVD of $Z^k$, we consider the cases of regular Frank-Wolfe directions and in-face directions separately. In the case of a regular Frank-Wolfe direction, we have that $D^k = \bar\alpha_k(-\delta \mathbf{u}_{k}\mathbf{v}^T_{k} - Z^k)$ and therefore:
\begin{equation*}
Z^{k+1} = Z^k + \bar\alpha_k(-\delta \mathbf{u}_{k}\mathbf{v}^T_{k} - Z^k) = (1 - \bar\alpha_k)Z^k - \bar\alpha_k\delta \mathbf{u}_{k}\mathbf{v}^T_{k} = (1 - \bar\alpha_k)UDV^T- \bar\alpha_k\delta \mathbf{u}_{k}\mathbf{v}^T_{k} \ .
\end{equation*}
Thus, given the thin SVD of $Z^k$, computing the thin SVD of $Z^{k+1}$ is a scaling plus a rank-$1$ update of the thin SVD, which can be performed very efficiently in terms of both computation time and memory requirements, see \cite{brand2006fast}. An analogous argument applies to the away-step strategy when $Z^k \in \inside({\cal B})$. Otherwise, when $Z^k \in \partial({\cal B})$, recall that we can write any in-face direction as $D^k = U\Delta V^T$ for an easily given matrix $\Delta \in \mathbb{S}^{r \times r}$. Thus we have:
\begin{equation*}
Z^{k+1} = Z^k + D^k = UDV^T + U\Delta V^T = U(D + \Delta)V^T \ .
\end{equation*}
Recall from \eqref{mit} that we have $D + \Delta \succeq 0$. Therefore, to compute the thin SVD of $Z^{k+1}$, we first compute an eigendecomposition of the $r \times r$ symmetric positive semidefinite matrix $D + \Delta$, so that $D + \Delta = RSR^T$ where $R$ is orthonormal and $S$ is diagonal with nonnegative entries, and then update the thin SVD of $Z^{k+1}$ as $Z^{k+1} = (UR)S(VR)^T$.

To compute the current gradient from the thin SVD of $Z^k$, note that $\nabla f(Z^k) = (Z^k-X)_\Omega$ is a sparse matrix that is 0 everywhere except on the $\Omega$ entires; thus computing $\nabla f(Z^k)$ from the thin SVD of $Z^k$ requires performing $|\Omega|$ length $r$ inner product calculations. As compared to storing the entire matrix $Z^k$, our implementation requires a modest amount of extra work to compute $\nabla f(Z^k)$, but the cost of this extra work is far outweighed by the benefits of not storing the entire matrix $Z^k$. Alternatively, it is slightly more efficient to update only the $\Omega$ entries of $Z^k$ at each iteration (separately from the thin SVD of $Z^k$) and to use these entries to compute $\nabla f(Z^k)$.

\subsection{Rank accounting}\label{rankout}

As developed throughout this Section, the computational effort required at iteration $k$ of Algorithm \ref{RRa} depends very much on $\rnk(Z^k)$ for tasks such as computing the in-face direction $D^k$ (using either the away-step approach or direct solution on the minimal face), computing the maximal step-size $\alpha_k^{\mathrm{stop}}$ in Step (3.), and updating the thin SVD of $Z^k$.  Herein we examine how $\rnk(Z^k)$ can change over the course of the algorithm. At any given iteration $i$, there are four relevant possibilities for how the next iterate is chosen:
\begin{itemize}
\item[(a)] The current iterate $Z^i$ lies on the boundary of ${\cal B}$, and the next iterate $Z^{i+1}$ is chosen according to the criteria in Step (3a.).
\item[(b)] The current iterate $Z^i$ lies on the boundary of ${\cal B}$, and the next iterate $Z^{i+1}$ is chosen according to the criteria in Step (3b.).
\item[(c)] The next iterate $Z^{i+1}$ is chosen according to the criteria in Step (3c.).
\item[(d)] The current iterate $Z^i$ lies in the interior of ${\cal B}$, and the next iterate is chosen according to either the criteria in Step (3a.) or Step (3b.).
\end{itemize}
The following proposition presents bounds on rank of $Z^k$.
\begin{prop}\label{rank-prop}
Let $N^a_k$, $N^b_k$, $N^c_k$, and $N^d_k$ denote the number of times within the first $k$ iterations that the above conditions (a), (b), (c), and (d) hold, respectively. Then
\begin{equation}\label{nice-bound}
\rnk(Z^k) ~\le~ k+ 1 - 2N^a_k - N^b_k \ .
\end{equation}
\end{prop}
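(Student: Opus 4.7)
I would prove the bound by induction on $k$, tracking how the rank of $Z^k$ evolves from one iteration to the next in tandem with the change in the right-hand-side quantity $k+1-2N^a_k-N^b_k$. The base case is immediate: at initialization $Z^0$ is the rank-one matrix from \eqref{Z0}, so $\rnk(Z^0)=1=0+1-2\cdot 0 - 0$. For the inductive step, I would consider separately the four cases (a)--(d) describing how $Z^{k+1}$ is produced. In each case I would use the explicit form of the update from Section \ref{svd-updating} (which expresses $Z^{k+1}$ in terms of the current thin SVD $Z^k=UDV^T$) to bound $\rnk(Z^{k+1})$ in terms of $\rnk(Z^k)$, and then compare to the one-step change in $k+1-2N^a_k-N^b_k$.

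In case (a), $Z^k\in\partial{\cal B}$ and the full in-face step $Z^{k+1}=Z^k+\alpha_k^{\mathrm{stop}}D^k$ is taken. Writing $D^k=U\Delta V^T$ as in Section \ref{sowhat}, we have $Z^{k+1}=U(D+\alpha_k^{\mathrm{stop}}\Delta)V^T$, and by definition $\alpha_k^{\mathrm{stop}}$ is the \emph{largest} $\alpha$ with $D+\alpha\Delta\succeq 0$, so $D+\alpha_k^{\mathrm{stop}}\Delta$ is singular and $\rnk(Z^{k+1})\le\rnk(Z^k)-1$. The bound quantity decreases by exactly $1$ (since $k$ increases by $1$ while $N^a_k$ increases by $1$), so the inductive hypothesis plus the strict rank drop yields the claim. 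In case (b), again $Z^k\in\partial{\cal B}$ but $\bar\beta_k\in[0,\alpha_k^{\mathrm{stop}}]$, and the same representation shows $D+\bar\beta_k\Delta\succeq 0$, hence $\rnk(Z^{k+1})\le\rnk(Z^k)$. Here the bound quantity is unchanged (since $N^b_k$ increments by $1$ while $k$ also increments by $1$, offsetting), so the inductive hypothesis carries through.

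In case (c) the regular Frank-Wolfe update gives $Z^{k+1}=(1-\bar\alpha_k)Z^k+\bar\alpha_k\tilde Z^k$ with $\tilde Z^k=-\delta\mathbf{u}_k\mathbf{v}_k^T$ a rank-one matrix from \eqref{rank1}, so $\rnk(Z^{k+1})\le\rnk(Z^k)+1$. In case (d) the current iterate lies in $\inside({\cal B})$, so ${\cal F}_{\cal B}(Z^k)={\cal B}$ and, as discussed in Section \ref{alex}, $\hat Z^k=\delta\mathbf{u}_k\mathbf{v}_k^T$ is also rank one, giving $Z^{k+1}=(1+\bar\beta_k)Z^k-\bar\beta_k\hat Z^k$ and hence $\rnk(Z^{k+1})\le\rnk(Z^k)+1$ again. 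In both of these cases the bound quantity increases by $1$ (since only $k$ advances among the relevant counters), which absorbs the possible unit increase in rank and closes the induction.

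The main obstacle is case (a): to justify the factor of $2$ in $2N^a_k$, one must show that a type-(a) step \emph{strictly} decreases the rank, not merely keeps it from increasing. This is exactly why the characterization of ${\cal F}_{\cal B}(\bar Z)$ from Theorem \ref{facial} together with the maximality of $\alpha_k^{\mathrm{stop}}$ in \eqref{mit} is essential; once that is in hand the other cases are straightforward SVD bookkeeping.
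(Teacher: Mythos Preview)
Your proposal is correct and takes essentially the same approach as the paper: both track how $\rnk(Z^k)$ changes under each of the four cases (a)--(d), using Theorem \ref{facial} (equivalently the maximality of $\alpha_k^{\mathrm{stop}}$ in \eqref{mit}) to get the strict rank drop in case~(a), the face structure to keep rank constant in case~(b), and the rank-one form of the update in cases~(c) and~(d). The only cosmetic difference is that you phrase it as an explicit induction on $k$, whereas the paper sums the per-step rank increments directly via $k=N^a_k+N^b_k+N^c_k+N^d_k$ to obtain $\rnk(Z^k)\le 1+N^c_k+N^d_k-N^a_k$.
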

\begin{proof}
Using the choice of the initial point $Z^0$ developed in Section \ref{there}, it holds that $\rnk(Z^0) = 1$. Now consider the $i^{\mathrm{th}}$ iterate value $Z^i$ for $i = 1, \ldots, k$. If condition (a) holds, then $\rnk(Z^{i+1}) \le \rnk(Z^i) - 1$ since $Z^{i+1}$ lies on a lower-dimensional face of ${\cal F}_{\cal B}(Z^i) \subset {\cal  B}$, whence from Theorem \ref{facial} it follows that $\rnk(Z^{i+1}) \le \rnk(Z^i) - 1$. If instead condition (b) holds, then $\rnk(Z^{i+1}) = \rnk(Z^i) $ since $Z^{i+1}$ lies in the relative interior of ${\cal F}_{\cal B}(Z^i) \subset {\cal B}$. Finally, in either case that condition (c) or condition (d) holds, it follows from \eqref{rank1} that $\tilde Z^i$ is a rank-one matrix and thus it holds that $\rnk(Z^{i+1}) \le \rnk(Z^i) + 1$. Since the four cases above are exhaustive, we have $k = N^a_k + N^b_k + N^c_k + N^d_k$ and we obtain $\rnk(Z^k) \le 1 + N^c_k + N^d_k - N^a_k = k+ 1 - 2N^a_k - N^b_k$.
\end{proof}

\section{Computational Experiments and Results}\label{compcomp}
In this section we present computational results of experiments wherein we apply different versions of the In-Face Extended Frank-Wolfe Method to the nuclear norm regularized matrix completion problem \eqref{easy}.\footnote{All computations were performed using MATLAB R2015b on a 3 GHz Intel Core i7 MacBook Pro laptop.}  Our main focus is on simulated problem instances, but we also present results for the MovieLens10M dataset.  The simulated instances were generated according to the model $X := w_1UV^T + w_2\mathcal{E}$, where the entries of $U \in \bbR^{m \times r}, V \in \bbR^{n \times r}$ and $\mathcal{E} \in \bbR^{m \times n}$ are all i.i.d.\ standard normal random variables, and the scalar parameters $w_1, w_2$ control the signal to noise ratio (SNR), namely $w_1 := 1/\|UV^T\|_F$ and $w_2 := 1/(\text{SNR}\|\mathcal{E}\|_F)$. The set of observed entries $\Omega$ was determined using uniform random sampling of entries with probability $\rho$, where $\rho$ is the target fraction of observed entries.  The objective function $f(\cdot)$ values were normalized so that $f(0) = .5$ and we chose the regularization parameter $\delta$ using a cross-validation-like procedure based on an efficient path algorithm variant of Algorithm \ref{algo-vfw}.\footnote{Specifically, we apply a version of Algorithm \ref{algo-vfw} that periodically increases the value of $\delta$, utilizing the previously found solution as a warm-start at the new value of $\delta$. We maintain a holdout set $\Omega^{\prime}$ and ultimately select the value of $\delta$ that minimizes the least-squares error on this set. 
}

We study several versions of the In-Face Extended Frank-Wolfe Method (Algorithm \ref{RRa}) based on different strategies for setting the parameters $\gamma_1$,$\gamma_2$, which we compare to the regular Frank-Wolfe method (Algorithm \ref{algo-vfw}) and the away-step method (Algorithm \ref{GMalgorithm}). We also study the atomic version of the away-step method -- which reformulates \eqref{easy} using the right-side of \eqref{big-simplex}.   All methods are implemented according to the details presented in Section \ref{comp}.  We focused on the following eight versions of methods with names given below and where ``\textsc{IF- $\cdot$}'' stands for In-Face:\begin{itemize}
\item[$\bullet$ \namedlabel{algFW}{\textsc{Frank-Wolfe}}] \textsc{Frank-Wolfe} -- Algorithm \ref{algo-vfw}
\item[$\bullet$ \namedlabel{algIFOneOne}{\textsc{IF-(1,1)}}] \textsc{IF-(1,1)} -- Algorithm \ref{RRa} using an away-step strategy, with $\gamma_1 = 1$, $\gamma_2 = 1$
\item[$\bullet$ \namedlabel{algIFZeroOne}{\textsc{IF-(0,1)}}] \textsc{IF-(0,1)} -- Algorithm \ref{RRa} using an away-step strategy,  with $\gamma_1 = 0$, $\gamma_2 = 1$
\item[$\bullet$ \namedlabel{algIFZeroInf}{\textsc{IF-(0,$\infty$)}}] \textsc{IF-(0,$\infty$)} -- Algorithm \ref{RRa} using an away-step strategy, with $\gamma_1 = 0$, $\gamma_2 = \infty$. This corresponds to always moving to the relative boundary of the minimal face containing $Z_k$ (thereby reducing the rank of $Z^{k+1}$) as long as the objective function value does not increase, while never moving partially within the current face.
\item[$\bullet$ \namedlabel{algIFOpt}{\textsc{IF-Optimization}}] \textsc{IF-Optimization} -- The simplified version of Algorithm \ref{RRa} with full in-face optimization as described at the end of Section \ref{qwerty}.  The in-face optimization subproblem is (approximately) solved using the proximal gradient method with matrix entropy prox function.
\item[$\bullet$ \namedlabel{algIFRank}{\textsc{IF-Rank-Strategy}}] \textsc{IF-Rank-Strategy} -- Algorithm \ref{RRa} with the away-step strategy and with $\gamma_1$ and $\gamma_2$ set dynamically as follows: we initially set $\gamma_1 = \gamma_2 = \infty$, and then reset $\gamma_1 = \gamma_2 = 1$ after we observe five consecutive iterations where $\rnk(Z^k)$ does not increase. This version can be interpreted as a two-phase method where we run Algorithm \ref{algo-vfw} until we observe that $\rnk(Z^k)$ begins to ``stall," at which point we switch to Algorithm \ref{RRa} with $\gamma_1 = \gamma_2 = 1$.
\item[$\bullet$ \namedlabel{algAwayNat}{\textsc{FW-Away-Natural}}] \textsc{FW-Away-Natural} -- Algorithm \ref{GMalgorithm}
\item[$\bullet$ \namedlabel{algAwayAtom}{\textsc{FW-Away-Atomic}}] \textsc{FW-Away-Atomic} -- Algorithm \ref{GMalgorithm} applied to the atomic reformulation of \eqref{easy} using the right-side of \eqref{big-simplex} \cite{simon, beck2015linearly, pena2015neumann}.
\end{itemize}
Tables \ref{small-table}, \ref{big-table}, and \ref{movie-table} present our aggregate computational results.  Before discussing these in detail, it is useful to first study Figure \ref{fig:rank-gap} which shows the behavior of each method in terms of ranks of iterates\footnote{The rank of a matrix is computed as the number of singular values larger than $10^{-6}$.  The rank-$1$ SVD computation for equation \eqref{easy2} is performed using the Matlab function \texttt{eigs}.} (left panel) and relative optimality gap (right panel) as a function of run time, for a particular (and very typical) simulated instance.  Examining the rank plots in the left panel, we see that the evolution of $\rnk(Z^k)$ is as follows:  the four methods \ref{algIFOneOne}, \ref{algIFZeroOne}, \ref{algIFZeroInf}, and \ref{algAwayNat} all quickly attain $\rnk(Z^k) \approx 37$ (the apparent rank of the optimum) and then stay at or near this rank from then on.  In contrast, the four methods \ref{algFW}, \ref{algIFRank}, \ref{algIFOpt} and \ref{algAwayAtom} all grow $\rnk(Z^k)$ approximately linearly during the early stages (due to a larger percentage of regular Frank-Wolfe steps), and then reach a maximum value that can be an order of magnitude larger than the optimal rank before the rank starts to decrease.  Once the rank starts to decrease, \ref{algIFRank} and \ref{algIFOpt} decrease $\rnk(Z^k)$ rather rapidly, whereas \ref{algFW} and \ref{algAwayAtom} decrease $\rnk(Z^k)$ painfully slowly.

The right panel of Figure \ref{fig:rank-gap} shows the relative optimality gaps of the methods.  It is noteworthy that two methods -- \ref{algIFOpt} and \ref{algIFRank} -- achieve very rapid progress during their early stages, a point that we will soon revisit.  However, all methods exhibit eventual slow convergence rates which is in line with the $O(1/k)$ theoretical convergence bound.

Let us now synthesize the two panels of Figure \ref{fig:rank-gap}.  The four methods \ref{algFW}, \ref{algIFRank}, \ref{algIFOpt} and \ref{algAwayAtom} each go through two phases: in the first phase each constructs a ``high information'' (high rank) solution (by taking mostly regular Frank-Wolfe steps), followed by a second phase where the solution is ``refined'' by lowering the rank while further optimizing the objective function (by taking proportionally more away-steps).  \ref{algFW} and \ref{algAwayAtom} build up to very high information but their build-down is sorely ineffective both in terms of ranks and objective function values.  \ref{algIFRank} is extremely effective at the refinement phase, and \ref{algIFOpt} is less effective in terms of rank reduction but still moreso than the other methods except of course for \ref{algIFRank}.  The other four methods, namely \ref{algIFOneOne}, \ref{algIFZeroOne}, \ref{algIFZeroInf}, and \ref{algAwayNat}, all rarely exceed rank $37$, as they spend a very high proportion of their effort on away-steps.  Of these four methods, \ref{algIFZeroInf} tends to perform best in terms of objective function values, as will be seen shortly in Tables \ref{small-table} and \ref{big-table}.  Last of all, we point out that for very large-scale problems storing the SVD of a high-rank matrix may become burdensome (over and above the computational cost for computing in-face directions on high-dimensional faces); thus it is important that the maximum rank of the iterates be kept small. In this regard Figure \ref{fig:rank-gap} indicates that excessive memory may arise for \ref{algFW}, \ref{algIFRank}, \ref{algAwayAtom}, and possibly \ref{algIFOpt}.
\begin{figure}[h!]
\centering
\scalebox{1}{\begin{tabular}{c c}
\scriptsize{\sf {$\text{Rank}(Z^k)$ vs.\ Time (secs)}} & \scriptsize{\sf{$\text{Log}_{10}\left(\tfrac{f(Z^k) - f^\ast}{f^\ast}\right)$ vs.\ Time (secs)}} \\

\includegraphics[width=0.43\textwidth,height=0.43\textheight, keepaspectratio=true]{\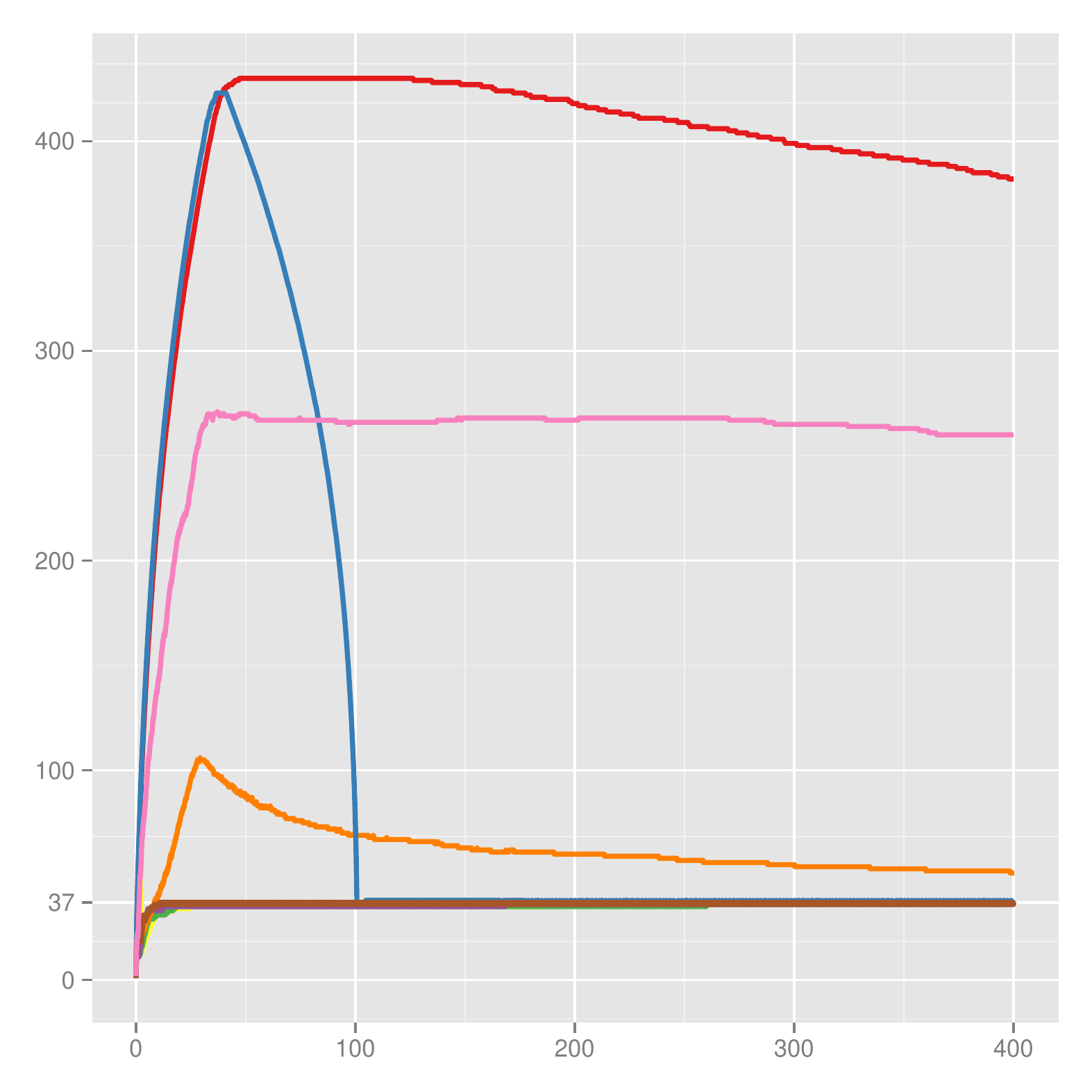}
&

\includegraphics[width=0.43\textwidth,height=0.43\textheight, keepaspectratio=true]{\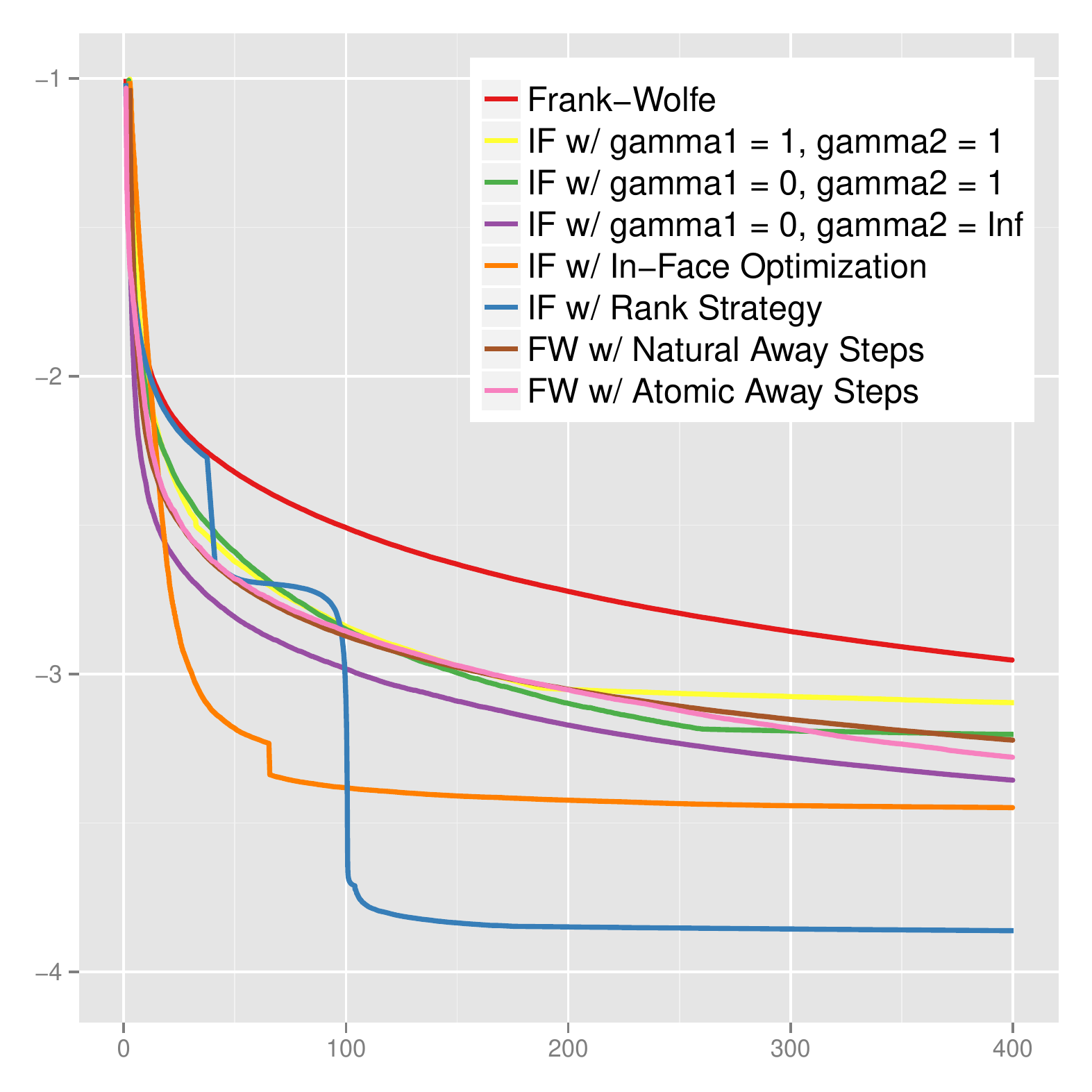} \\

\end{tabular}}
\caption{Figure showing plots of rank and relative optimality gap (log-scale) versus run time for different methods/strategies, for a single randomly generated problem instance with $m = 2000$, $n = 2500$, $\rho = 0.01$, $r = 10$, $\text{SNR} = 4$, and $\delta = 8.01$. This problem has a (very nearly) optimal solution with rank 37.}\label{fig:rank-gap}
\end{figure}

Table \ref{small-table} presents computational results for three different types of small-scale examples, averaged over $25$ sample instances generated and run for each type.  Note that the run time, final rank, and maximum ranks reported in Table \ref{small-table} are in synch with the patterns observed in Figure \ref{fig:rank-gap}.  \ref{algIFRank} exhibits the best run times, followed by \ref{algIFOpt} and then by \ref{algIFZeroInf}, all of which significantly outperform \ref{algFW}, \ref{algAwayNat}, and \ref{algAwayAtom}.  Furthermore, \ref{algIFOpt} and \ref{algIFZeroInf} have relatively low values of the maximum rank (unlike \ref{algIFRank}), while not giving up too much in terms of run time relative to \ref{algIFRank}.  Finally, note that \ref{algAwayAtom} is dramatically ineffective at delivering low rank solutions, which is undoubtedly due to the fact that the faces of the atomic representation are simply too small to be effective -- see Figure \ref{fig:atomic} and the discussion at the end of Section \ref{fw}.

Table \ref{big-table} presents computational results for eight individual medium-large scale examples.  Here we see mostly similar performance of the different methods as was see for the small-scale examples in Table \ref{small-table}.  \ref{algIFRank}, \ref{algIFZeroInf}, and \ref{algIFOpt} deliver the best balance between final rank, maximum rank, and run time, with perhaps \ref{algIFZeroInf} delivering consistently lower rank solutions albeit with higher run times.  We note that for these instances \ref{algIFRank} does not consistently deliver low rank solutions, which is due to the extra time it takes before the second phase (``refinement'') of the method commences.

\begin{table}[]
\centering
\resizebox{\textwidth}{!}{%
\begin{tabular}{@{}ccccccccccccc@{}}
\toprule
\multicolumn{13}{c}{\bf{Small-Scale Examples (25 samples per example)} \medskip} \\
 &  &  & & & \multicolumn{5}{c}{\bf In-Face Extended FW (IF-$\ldots$)} &  & \multicolumn{2}{c}{\bf Away Steps} \\
\cline{6-10} \cline{12-13}
 &  &  & {\bf Regular} & & \multicolumn{3}{c}{\bf $\gamma_1, \gamma_2$} & \bf{In-Face}   & \bf{Rank}   &    & & \\
 \cline{6-8}
\bf{Data}                             &  & \bf{Methods}         & \bf{FW} &  & \bf{1,1}   & \bf{0,1}   & \bf{0,$\infty$} & \bf{Opt.}   & \bf{Strategy}   & & \bf{Natural}    & \bf{Atomic} \\ \midrule \\
 $ m = 200, n = 400, \rho =     0.10 $ & & Time (secs) &    29.51 & &    22.86 &    23.07 &     7.89 &     \bf{2.34} &     \bf{2.30} & &    14.71 &     6.21 \\
$ r = 10, \text{SNR} = 5, \delta_{\text{avg}} =     3.75 $ & & Final Rank & 118.68 & & 16.36 & 16.36 & \bf{16.44} & 29.32 & 28.20 & & 16.72 & 119.00 \\
 & & Maximum Rank & 146.48 & & 19.04 & 17.28 & \bf{17.56} & 32.08 & 145.20 & & 18.04 & 121.96 \\ \\
$ m = 200, n = 400, \rho =     0.20 $ & & Time (secs) &   115.75 & &   153.42 &   150.89 &    27.60 &    20.62 &     \bf{3.48} & &    50.52 &    24.52 \\
$ r = 15, \text{SNR} = 4, \delta_{\text{avg}} =     3.82 $ & & Final Rank & 96.44 & & 16.16 & 16.12 & \bf{16.52} & 19.88 & 21.24 & & 16.68 & 106.60 \\
 & & Maximum Rank & 156.52 & & 26.72 & 17.96 & \bf{17.80} & 31.48 & 160.36 & & 18.84 & 106.80 \\ \\
$ m = 200, n = 400, \rho =     0.30 $ & & Time (secs) &   171.23 & &   198.96 &   202.01 &    35.93 &    31.67 &     \bf{5.04} & &    66.22 &    67.72 \\
$ r = 20, \text{SNR} = 3, \delta_{\text{avg}} =     3.63 $ & & Final Rank & 91.80 & & 20.08 & 20.08 & \bf{20.60} & \bf{21.72} & 25.56 & & 20.44 & 94.64 \\
 & & Maximum Rank & 162.24 & & 25.80 & 22.04 & \bf{21.96} & 33.36 & 168.72 & & 22.16 & 95.08 \\ \\  \bottomrule
\end{tabular}
}
\caption{Table reports the time required for each method to reach a relative optimality gap of $10^{-2.5}$, the rank of the corresponding final solution, and the maximum rank of the iterates therein. Numbers highlighted in boldface indicate the methods that perform well with regard to each criteria, while not performing poorly on run time.   All results are averaged over 25 samples for each problem type.}
\label{small-table}
\end{table}

\begin{table}[]
\centering
\resizebox{\textwidth}{!}{%
\begin{tabular}{@{}ccccccccccccc@{}}
\toprule
\multicolumn{13}{c}{\bf{Medium-Large Scale Examples} \medskip} \\
 &  &  & & & \multicolumn{5}{c}{\bf In-Face Extended FW (IF-$\ldots$)} &  & \multicolumn{2}{c}{\bf Away Steps} \\
\cline{6-10} \cline{12-13}
 &  &  & {\bf Regular} & & \multicolumn{3}{c}{\bf $\gamma_1, \gamma_2$} & \bf{In-Face}   & \bf{Rank}   &    & & \\
 \cline{6-8}
\bf{Data}                             &  & \bf{Methods}         & \bf{FW} &  & \bf{1,1}   & \bf{0,1}   & \bf{0,$\infty$} & \bf{Opt.}   & \bf{Strategy}   & & \bf{Natural}    & \bf{Atomic} \\ \midrule \\
$ m = 500, n = 1000, \rho =     0.25$ & & Time (secs) &   137.62 & &    51.95 &    53.21 &    18.20 &     \bf{4.41} &     \bf{6.37} & &    31.55 &   157.31 \\
$ r = 15, \text{SNR} = 2, \delta =     3.57 $ & & Final Rank (Max Rank) & 53 (126) & & 16 (17) & 15 (17) & 16 (17) & \bf{17 (19)} & 121 (136) & & 15 (17) & 50 (52) \\ \\
$ m = 500, n = 1000, \rho =     0.25 $ & & Time (secs) &   256.08 & &   110.37 &   110.77 &    46.07 &     \bf{6.76} &     \bf{7.91} & &    73.95 &   322.24 \\
$ r = 15, \text{SNR} = 10, \delta =     4.11 $ & & Final Rank (Max Rank) & 41 (128) & & 15 (17) & 15 (17) & 16 (17) & \bf{15 (18)} & {\bf 18} (140) & & 16 (17) & 48 (48) \\ \\
$ m = 1500, n = 2000, \rho =     0.05 $ & & Time (secs) &   124.76 & &   108.97 &   113.58 &    24.75 &    \bf{11.09} &    \bf{12.71} & &    40.23 &    60.83 \\
$ r = 15, \text{SNR} = 2, \delta =     6.01 $ & & Final Rank (Max Rank) & 169 (210) & & 15 (18) & 16 (17) & \bf{16 (16)} & 31 (44) & 206 (206) & & 16 (16) & 128 (138) \\ \\
$ m = 1500, n = 2000, \rho =     0.05 $ & & Time (secs) &   800.01 & &   518.72 &   496.08 &   166.01 &    \bf{21.90} &    \bf{31.41} & &   309.58 &   407.22 \\
$ r = 15, \text{SNR} = 10, \delta =     8.94 $ & & Final Rank (Max Rank) & 119 (266) & & 15 (17) & 15 (17) & 15 (17) & \bf{15 (23)} & {\bf 15} (256) & & 15 (18) & 172 (185) \\ \\
$ m = 2000, n = 2500, \rho =     0.01 $ & & Time (secs) &   105.44 & &    45.39 &    36.47 &    \bf{23.15} &    \bf{20.07} &    47.83 & &    30.07 &    26.92 \\
$ r = 10, \text{SNR} = 4, \delta =     7.92 $ & & Final Rank (Max Rank) & 436 (436) & & 37 (38) & 35 (38) & \bf{37 (38)} & 67 (107) & 430 (430) & & 37 (39) & 245 (276) \\ \\
$ m = 2000, n = 2500, \rho =     0.05 $ & & Time (secs) &    99.84 & &    51.90 &    48.26 &    18.79 &     \bf{6.92} &     \bf{6.70} & &    30.37 &    89.09 \\
$ r = 10, \text{SNR} = 2, \delta =     5.82 $ & & Final Rank (Max Rank) & 68 (98) & & 10 (11) & 10 (11) & \bf{11 (11)} & 13 (15) & 94 (94) & & 10 (11) & 52 (52) \\ \\
$ m = 5000, n = 5000, \rho =     0.01 $ & & Time (secs) &   251.33 & &   168.66 &   172.21 &    64.56 &    \bf{26.25} &    \bf{17.70} & &    96.79 &    90.41 \\
$ r = 10, \text{SNR} = 4, \delta =    12.19 $ & & Final Rank (Max Rank) & 161 (162) & & 10 (24) & 11 (18) & \bf{11 (20)} & 22 (34) & 20 (112) & & 10 (16) & 181 (182) \\ \\
$ m = 5000, n = 7500, \rho =     0.01 $ & & Time (secs) &   272.19 & &   107.19 &   116.58 &    \bf{52.65} &    \bf{54.02} &   145.13 & &   107.60 &    94.96 \\
$ r = 10, \text{SNR} = 4, \delta =    12.19 $ & & Final Rank (Max Rank) & 483 (483) & & 33 (43) & 34 (36) & \bf{32 (37)} & 63 (123) & 476 (476) & & 36 (42) & 229 (298) \\ \\ \bottomrule
\end{tabular}
}
\caption{Table reports the time required for each method to reach a relative optimality gap of $10^{-2.5}$, the rank of the corresponding final solution, and the maximum rank of the iterates therein, for eight single problem instances. Numbers highlighted in boldface indicate the methods that perform well with regard to each criteria, while not performing poorly on run time.}
\label{big-table}
\end{table}

Table \ref{movie-table} shows computational tests on a large-scale real dataset, namely the MovieLens10M dataset, with $m = 69878$, $n = 10677$, $|\Omega| = 10^7$ (with sparsity approximately $1.3\%$), and $\delta = 2.59$.  We only tested \ref{algIFZeroInf} (and benchmarked against \ref{algFW}) since \ref{algIFZeroInf} appears to be very promising for large-scale instances due to its ability to maintain relatively low-rank iterates throughout, while also performing well in terms of run time.  The results in Table \ref{movie-table} further reinforce the findings from Table \ref{big-table} concerning the advantages of \ref{algIFZeroInf} both in terms of rank of the final iterate as well as run time to achieve the target optimality gap.

\noindent{\bf Summary Conclusions.}  In addition to its theoretical computational guarantees (Theorem \ref{RRguarantee}, Proposition \ref{prop-alt}), the In-Face Extended Frank-Wolfe Method (in different versions) shows significant computational advantages in terms of delivering low rank and low run time to compute a target optimality gap.  Especially for larger instances, \ref{algIFZeroInf} delivers very low rank solutions with reasonable run times.  \ref{algIFRank} delivers the best run times, beating existing methods by a factor of $10$ or more.  And in the large-scale regime, \ref{algIFOpt} generally delivers both low rank and low run times simultaneously, and is usually competitive with the best methods on one or both of rank and run time.

\begin{table}[]
\centering
\resizebox{.8\textwidth}{!}{%
\begin{tabular}{@{}ccccccccc@{}}
\toprule
&\multicolumn{7}{c}{\bf{MovieLens10M Dataset}\medskip} &\\
& & & \multicolumn{2}{c}{\bf{\ref{algFW}}} & & \multicolumn{2}{c}{\bf{\ref{algIFZeroInf}}}& \\ \cline{4-5}\cline{7-8}
&Relative Optimality Gap & & Time (mins) & Rank & & Time (mins) & Rank& \\ \midrule
&$10^{-1.5}$ & & 7.38 & 103 & & 7.01 & 44& \\
&$10^{-2}$ & & 28.69 & 315 & & 14.73 & 79& \\
&$10^{-2.25}$ & & 69.53 & 461 & & 22.80 & 107& \\
$ \ \ \ \ \ \ \ \ \ \ \ \ \ \ \ \ \ $ &$10^{-2.5}$ & & 178.54 & 454 & & 42.24 & 138& $ \ \ \ \ \ \ \ \ \ \ \ \ \ \ \ \ $ \\ \bottomrule
\end{tabular}
}
\caption{CPU time and rank of final solutions for \ref{algFW} and \ref{algIFZeroInf} for different relative optimality gaps for the MovieLens10M dataset.}
\label{movie-table}
\end{table}
\small{
\bibliographystyle{abbrv}
\bibliography{GF-papers-nips}
}
\normalsize{
\appendix
\section{Appendix - Remainder of the Proof of Theorem \ref{QAtheorem}}\label{swapoff}}
Recall that it remains to prove the following inequality:
\begin{equation}\label{snrc2}\frac{1}{f(x_{k+1}) - B_{k+1}} \ge \frac{1}{f(x_k) - B_k} + \frac{1}{2\bar L D^2}  \ \ \ \ \mathrm{for~all~} k \ge 0 \ . \end{equation}
Let us fix some simplifying notation.  Let $r_k := f(x_k) - B_k \ge 0$ and $G_k := \nabla f(x_k)^T(x_k - \tilde x_k) \ge 0$, and note that $B_k \ge B^w_k = f(x_k) - G_k$, so that $G_k \ge r_k \ge 0$ for $k \ge 0$.  Also define $C_k :=\bar L \|\tilde x_k - x_k\|^2$, whereby $C_k \le \bar L D^2$ and $\bar\alpha_k = \min\left\{\frac{G_k}{C_k}, 1 \right\}$ for $k \ge 0$.  With this notation \eqref{snrc2} can be written as $1/r_{k+1} \ge 1/r_k + 1/(2\bar L D^2)$.  Substituting $x=x_k$ and $y=x_{k+1} = x_k + \bar\alpha_k(\tilde x_k - x_k)$ in \eqref{lipschitz2} and using $\bar L \ge L$ yields:
\begin{equation}\label{vivek}
f(x_{k+1}) \le  f(x_{k})+\bar\alpha_k\nabla f(x_k)^T(\tilde x_k-x_k) + \tfrac{\bar L}{2}\bar\alpha_k^2 \|\tilde x_k - x_k\|^2 = f(x_{k})-\bar\alpha_k G_k + \tfrac{1}{2}\bar\alpha_k^2C_k \ . \end{equation}
Note that if we instead used an exact line-search to determine $x_{k+1}$, then \eqref{vivek} also holds since in that case we have $f(x_{k+1}) \leq f(x_k + \bar\alpha_k(\tilde x_k - x_k))$. We now examine two cases depending on the relative magnitudes of $G_k$ and $C_k$.
\noindent {\bf Case 1:  $G_k \le C_k$}.  In this case $\bar\alpha_k = G_k / C_k$, and substituting this value in the right side of \eqref{vivek} yields $f(x_{k+1}) \le f(x_k) - \frac{(G_k)^2}{2C_k}$, which shows that $f(x_{k+1}) \le f(x_k)$ as well as $r_{k+1} \le r_k$, and also yields:
$$
r_{k+1} \ = \ f(x_{k+1}) - B_{k+1} \le f(x_{k+1}) - B_{k} \le f(x_k) - \frac{(G_k)^2}{2C_k} - B_{k} = r_k -  \frac{(G_k)^2}{2C_k} \le  r_k -  \frac{r_k r_{k+1}}{2C_k} \ , $$ where the last inequality uses $r_{k+1} \le r_k \le G_k$.  Dividing the above inequality by $r_{k+1}r_k$ and rearranging yields
$$\frac{1}{r_{k+1}} \ge \frac{1}{r_k} + \frac{1}{2C_k} \ge \frac{1}{r_k} + \frac{1}{2\bar L D^2} \ , $$ where the second inequality above uses $C_k \le \bar L D^2$.  This shows that \eqref{snrc2} holds in this case.

\noindent {\bf Case 2:  $G_k > C_k$}.  In this case $\bar\alpha_k = 1$.  Substituting $x=x_k$ and $y=x_{k+1} = x_k + \bar\alpha_k(\tilde x_k - x_k) = \tilde x_k$ in \eqref{vivek} yields $f(x_{k+1}) \le f(x_k) - G_k + \frac{1}{2}C_k < f(x_k) - C_k + \frac{1}{2}C_k = f(x_k) -  \frac{1}{2}C_k$, which shows that $f(x_{k+1}) < f(x_k)$  as well as $r_{k+1} < r_k$, and also yields:
\begin{equation}\label{patriots}
r_{k+1} \ = \ f(x_{k+1}) - B_{k+1} \le  f(x_{k+1}) - B_{k} \le  f(x_{k}) -G_k + \tfrac{1}{2}C_k - B_{k} = r_k -  G_k + \tfrac{1}{2}C_k \ , \end{equation} from which we derive:
\begin{equation}\label{eviesleep} 0 \le r_{k+1} \le r_k -  G_k + \tfrac{1}{2}C_k < r_k -  G_k + \tfrac{1}{2}G_k = r_k - \tfrac{1}{2}G_k \ , \end{equation} where the last inequality above uses $G_k > C_k$.  We now consider two sub-cases, one for $k=0$ and another sub-case for $k \ge 1$.  Let us first consider when $k=0$.  Then
$$ G_0 r_0 +G_0 C_0 = G_0 r_0 +\tfrac{1}{2} G_0 C_0 +\tfrac{1}{2} G_0 C_0 \ge (r_0)^2 +\tfrac{1}{2} (C_0)^2 +\tfrac{1}{2} r_0 C_0 \ ,  $$ since $G_0 \ge C_0$ and $G_0 \ge r_0$, and now add $r_0C_0$ to both sides and rearrange to yield:
$$r_0C_0 \ge r_0C_0 + (r_0)^2   - G_0r_0-G_0C_0 +\tfrac{1}{2} (C_0)^2 +\tfrac{1}{2} r_0C_0 =(r_0-G_0+\tfrac{1}{2}C_0)(r_0+C_0) \ge r_1(r_0+C_0)   \ , $$ where the second inequality uses \eqref{patriots} with $k=0$.  Therefore:
$$\frac{1}{r_1} \ge \frac{r_0+C_0}{r_0C_0} = \frac{1}{r_0} + \frac{1}{C_0} \ge \frac{1}{r_0} + \frac{1}{\bar L D^2} \ , $$ which proves \eqref{snrc2} for this case for $k = 0$.  Last of all, we consider when $k \ge 1$.  Taking  \eqref{eviesleep} and dividing by $r_k r_{k+1}$ and rearranging yields:
$$\frac{1}{r_{k+1}} > \frac{1}{r_k} + \frac{G_k}{2r_k r_{k+1}} \ge \frac{1}{r_k} + \frac{1}{2 r_{k+1}}  \ , $$ where the second inequality follows since $G_k \ge r_k$.  Now notice from \eqref{eviesleep} that $r_{k+1} \le r_k -G_k + C_k/2 \le C_k/2$ since $G_k \ge r_k$.  Substituting this last inequality into the right-most term above yields:
$$\frac{1}{r_{k+1}} \ge \frac{1}{r_k} + \frac{1}{C_k} \ge \frac{1}{r_k} + \frac{1}{2C_k} \ge \frac{1}{r_k} + \frac{1}{2\bar L D^2} \ , $$ which shows \eqref{snrc2} for this case for $k \ge 1$, and completes the proof. \qed

\end{document}